\newtheorem {Lemma}{Lemma}[section]
\newtheorem {Theorem} {Theorem}[section]
\newtheorem {Corollary}{Corollary}[section]
\numberwithin{equation}{section}
\begin{document}

\title{A graph for which the second largest distance eigenvalue is less than $\frac{-3+\sqrt{5}}{2}$  is chordal}

\author{Haiyan Guo\textsuperscript{1}\footnote{E-mail: ghaiyan0705@m.scnu.edu.cn},
Bo Zhou\textsuperscript{2}\footnote{
E-mail: zhoubo@scnu.edu.cn}\\
\textsuperscript{1}School of Computer Science, South China Normal University,\\
Guangzhou 510631, P. R. China\\
\textsuperscript{2}School of Mathematical Sciences, South China Normal University,\\
Guangzhou 510631, P. R. China}

\date{}
\maketitle

\begin{abstract}
Let $G$ be a connected graph with vertex set $V(G)$. The distance,
$d_G(u,v)$, between vertices $u$ and $v$ in $G$ is defined as the length of a shortest path between
$u$ and $v$ in $G$. The distance matrix of $G$ is the matrix $D(G)=(d_G(u,v))_{u,v\in V(G)}$. The
second largest distance eigenvalue of $G$ is the second largest one in the spectrum of $D(G)$.
We show that any connected graph with the second largest distance eigenvalue  less than $\frac{-3+\sqrt{5}}{2}$ is chordal, and characterize those bicyclic graphs and split graphs with the second largest distance eigenvalue less than $-\frac{1}{2}$.
\\ \\
{\bf Keywords:}  second largest distance eigenvalue, chordal graphs, bicyclic graphs, split graphs
\end{abstract}

\section{Introduction}

The adjacency eigenvalues  (commonly called the eigenvalues) of a graph are the eigenvalues of its adjacency matrix.  The second largest eigenvalue of graphs has been widely studied by many mathematicians, see, e.g. \cite[Chapter 4]{BH}, \cite[Subsection 3.11.2]{CRS} and reference therein.
For example,
Cao and Hong \cite{CaoH} characterized the simple graphs with the second largest eigenvalue less than $\frac{1}{3}$. Wu et al. \cite{WQP} determined the simple connected graphs with the second largest eigenvalue less than $\frac{1}{2}$. Cheng et al. \cite{CGK} considered graphs with three eigenvalues and second largest eigenvalue at most $1$. Liu et al. \cite{LCS} determined all connected $\{K_{1,3}, K_5-e\}$-free graphs whose second largest eigenvalue does not exceed $1$. Zhang et al. \cite{ZLK} classified the $2$-partially distance-regular graphs whose each local graph has second largest eigenvalue at most $1$. Recently, multiplicity of the second largest eigenvalue of graphs has also received much attention, see  \cite{CH,HSZZ}.

The distance eigenvalues of a connected graph are the eigenvalues of its distance  matrix.
The distance eigenvalues of graphs were first studied by Graham and Pollak \cite{GP}.  They established a relationship between the number of negative distance eigenvalues of trees and the addressing problem in data communication systems. Thereafter and in particular, in recent 15 years, the distance eigenvalues
attracted much more attention. However, the focus was more on the largest distance eigenvalue (also known as distance spectral radius), see the survey of Aouchiche and Hansen \cite{AH} and \cite{E,HZ,LZ,O,XSW,WZ}.

As far as we know, the only studies on the second largest distance eigenvalue of graphs are as follows. Lin \cite{L} showed that the second largest distance eigenvalue of a graph $G$ is less than the number of triangles in $G$ when the independent number is less than or equal to two, confirming a conjecture in \cite{F}. Xing and Zhou \cite{XZ} characterized all connected graphs with the second largest distance eigenvalue less than $-2+\sqrt{2}$ and all trees with the second largest distance eigenvalue less than $-\frac{1}{2}$. Besides, they also considered unicyclic graphs with a few exceptions whose second largest distance eigenvalue less than $-\frac{1}{2}$.
In \cite{XZ2}, they obtained sharp lower bounds for the second largest distance eigenvalue of the $k$-th power of a connected graph and determined all trees and unicyclic graphs $G$ such that the second largest distance eigenvalue of the squares less than $\frac{\sqrt{5}-3}{2}$. Liu et al. \cite{LXG} proved that the graphs with the second largest distance eigenvalue less than $\frac{17-\sqrt{329}}{2}\approx-0.5692$ are determined by their distance spectra, among other results. Xue et al. \cite{XLS} characterized all block graphs whose second largest distance eigenvalue less than  $-\frac{1}{2}$. Alhevaz et al. \cite{A} gave some upper and lower bounds for the second largest eigenvalue of the generalized distance matrix of graphs in terms of some graph parameters.

A graph is chordal if every cycle  of length at least four  has a chord, where
a chord is an edge joining two non-adjacent vertices of the cycle.
A connected graph on $n$ vertices with $n+1$ edges is called a bicyclic graph. A graph $G$ is a split graph if  both $G$ and $\overline{G}$ are chordal.
In this paper, we show that any connected graphs whose second largest distance eigenvalue is less than $-\frac{1}{2}$ must be  chordal  and  characterize all  bicyclic graphs and split graphs with the second largest distance eigenvalue less than $-\frac{1}{2}$.

\section{Preliminaries}

All graphs considered in this paper are simple and connected. Let $G$ be a graph with vertex set $V(G)$ and edge set $E(G)$.

The distance between vertices $u$ and $v$ in $G$ is defined as the length of a shortest  path connecting $u$ and $v$ in $G$.
Assume that  $V(G)=\{v_1,\dots,v_n\}$ with $n\ge 2$. The distance matrix of $G$ is defined as the $n\times n$ matrix $D(G)=(d_G(v_i,v_j))$.  The eigenvalues of $D(G)$ are called the distance eigenvalues of $G$.
Since $D(G)$ is symmetric, the distance eigenvalues of $G$ are all real, so they may be ordered as
$\lambda_1(G)\ge \dots \ge \lambda_n(G)$.
Then  $\lambda_2(G)$ is the second largest distance eigenvalue of $G$.

For a graph $G$ with  $v\in V(G)$, we use $N_G(v)$ to denote the neighborhood of $v$ in $G$, and let $d_G(v)=|N_G(v)|$ be the degree of $v$ in $G$. For a nonempty vertex subset $S$, let $G[S]$ be the subgraph of $G$ induced by $S$. For a  graph $G$ with $n$ vertices and $m$ edges, if $m=n+c-1$, then $G$ is called a $c$-cyclic graph. Specially, a $c$-cyclic graph with $c=0,1, 2$ is known as  a tree, a unicyclic graph, a bicyclic graph, respectively.

As usual, we denote by $K_n$ the complete graph on $n$ vertices, and $K_{r,s}$ the complete bipartite graph with bipartite sizes $r$ and $s$. Let $S_n=K_{1, n-1}$.  Denote by $C_n$ the cycle on $n$ vertices. Let $\overline{G}$ be the complement of a graph $G$.

A path $u_0\dots u_r$ with $r\geq 1$ in a graph $G$ is called a pendant path of length $r$
at $u_0$ if $\mbox{d}_G(u_0)\geq 3$, the degrees of $u_1,\dots,u_{r-1}$ (if any exists) are all equal to $2$ in $G$, and $\mbox{d}_G(u_r)=1$. In this case, we also say that  $G$ is obtained from $G-\{u_1,\dots, u_r\}$ by attaching a pendant path of length $r$ at $u_0$. For $v\in V(G)$, the graph obtained from $G$ by attaching a pendant path of length $0$ in $G$ is itself.
A pendant path of length $1$ at $u_0$ is called a pendant edge at $u_0$.

A clique of a graph is a set of pairwise adjacent vertices,  and a maximum clique is a clique with maximum cardinality. An independent set is a set of pairwise non-adjacent vertices.

A graph is a split graph if its vertex set can be partitioned into a clique and an independent set. A graph $G$ is a split graph if and only if both $G$ and $\overline{G}$ are chordal, or equivalently,  it does not have an induced $C_4, \overline{C_4}$, or $C_5$.

 A block of a given graph is a maximal connected subgraph  that has no cut vertex.
A connected graph $G$ is called a block graph (also known as clique tree) if all of its blocks are cliques. A block star is a block graph whose blocks contain a common vertex. A block graph $G$ is loose if for each vertex $v\in V(G)$, the number of blocks of $G$ which contain the vertex $v$ is at most $2$.  Let $BG(p,q,3,2,2)$ with $p,q\ge 2$ and $BGA$ be two block graphs as shown in Fig. \ref{ff1}.
\begin{figure}[htbp]
  \centering
  \includegraphics[width=11cm]{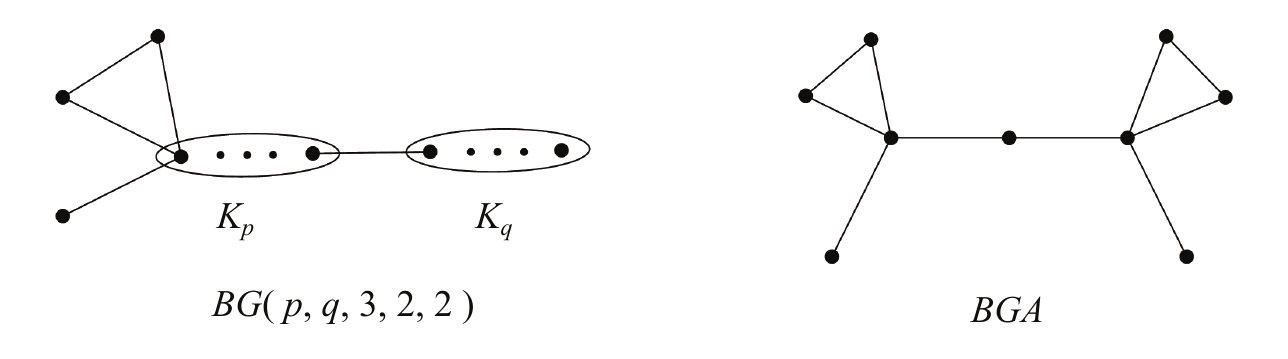}\\
  \caption{Graphs $BG(p,q,3,2,2)$ and BGA.}\label{ff1}
\end{figure}

\begin{Lemma}\cite{XLS}\label{block}
Let $G$ be a block graph with the second largest distance eigenvalue $\lambda_2(G)$. Then $\lambda_2(G)<-\frac{1}{2}$ if and only if
\begin{itemize}
   \item $G$ is a block star, or
   \item $G$ is a loose block graph, or
   \item $G$ is a nontrivial  connected induced subgraph of $BG(p, q, 3, 2, 2)$, or
   \item $G$ is a nontrivial connected induced subgraph of $BGA$.
  \end{itemize}
\end{Lemma}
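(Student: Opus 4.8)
The plan is to prove Lemma~\ref{block} by a forbidden-configuration argument. The starting point is that a block graph is distance-hereditary: every connected induced subgraph $H$ of a block graph $G$ is isometric, since any shortest path between two vertices of $H$ passes only through cut vertices of $G$ lying between them, and deleting a cut vertex disconnects $G$, so $H$ must retain those cut vertices. Hence $D(H)$ is a principal submatrix of $D(G)$, and Cauchy interlacing gives $\lambda_2(H)\le\lambda_2(G)$. Thus the class of block graphs with $\lambda_2<-\frac{1}{2}$ is closed under taking connected induced subgraphs, and it suffices to (i) compile a finite list $\mathcal F$ of block graphs that are minimal with respect to the property $\lambda_2\ge-\frac{1}{2}$; (ii) check that every member of the four families in the statement is $\mathcal F$-free and satisfies $\lambda_2<-\frac{1}{2}$; and (iii) show conversely that a block graph avoiding every member of $\mathcal F$ as a connected induced subgraph lies in one of the four families.

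For (ii), the ``if'' direction, I would handle the families one at a time. In a block star with centre $v$, the partition of $V(G)\setminus\{v\}$ into the vertex sets of the blocks is essentially equitable for $D(G)$ --- distances are $1$ inside a block, $2$ across blocks, and $1$ to $v$ --- so the eigenvalues other than the largest one come either from ``within-block'' differences (eigenvalue $-1$) and ``across-block'' differences (eigenvalue $-2$), or from a small quotient matrix whose second eigenvalue can be checked to be $<-\frac{1}{2}$. A loose block graph is a path of cliques; here I would induct on the number of blocks, peeling off an end block and combining interlacing with a direct estimate for the short instances, or again reduce to a bounded-size quotient matrix. For $BGA$ and the induced subgraphs of $BG(p,q,3,2,2)$, monotonicity under induced subgraphs means it suffices to bound $\lambda_2$ of the largest graphs; $BGA$ is a single graph and $\lambda_2$ is computed directly, while $BG(p,q,3,2,2)$ has two unbounded clique sizes, so I would collapse those two cliques by an equitable partition, obtain a quotient matrix of fixed order, and show its second eigenvalue stays below $-\frac{1}{2}$ for all $p,q\ge2$, using the limit $p,q\to\infty$ together with the monotonicity to control the tail.

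The core of the argument is (iii), and this is where the main difficulty lies. First I would pin down $\mathcal F$ by inspecting small block graphs for the minimal ones violating $\lambda_2<-\frac{1}{2}$; the natural candidates are a cut vertex lying in three blocks one of which is too large, two non-adjacent cut vertices with too much branching between them, and a short list of specific small clique-trees (certain spiders, a triangle with several pendant paths, and the like). Then, given an $\mathcal F$-free block graph $G$, I would run a case analysis on its block-cut tree. If every cut vertex lies in at most two blocks, the block-cut tree is a path, so $G$ is loose; otherwise some cut vertex $v_0$ lies in at least three blocks. If in addition all blocks contain $v_0$, then $G$ is a block star; if not, there is a block away from $v_0$, and the combination of the branching at $v_0$ with this extra block produces one of the configurations in $\mathcal F$ unless $G$ is small enough to be forced inside $BG(p,q,3,2,2)$ or $BGA$. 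The delicate points are making $\mathcal F$ genuinely exhaustive, so that $\mathcal F$-freeness really does collapse $G$ onto one of the four shapes, and handling the borderline cases where $G$ is ``almost'' a block star or ``almost'' loose; both require careful eigenvalue estimates on the threshold configurations, for which the interlacing and quotient-matrix bookkeeping above is again the main tool.
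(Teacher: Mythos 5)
First, a point of reference: the paper does not prove Lemma \ref{block} at all --- it is quoted from \cite{XLS} --- so there is no in-paper argument for you to match, and your sketch has to stand on its own as a reconstruction of that external proof. Your preliminary reduction is fine and standard: in a block graph every connected induced subgraph is isometric (block graphs are distance-hereditary), so by Lemma \ref{Interlacing} the class of block graphs with $\lambda_2<-\frac{1}{2}$ is closed under connected induced subgraphs, and a minimal-forbidden-subgraph strategy is legitimate in principle. But the two items you defer --- actually exhibiting the finite list $\mathcal F$ and proving that $\mathcal F$-freeness forces one of the four shapes --- are precisely the content of the theorem; step (iii) as written is a plan, not a proof.

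There are also concrete errors in the parts you do describe. A loose block graph is not a path of cliques: the definition only requires that each \emph{vertex} lie in at most two blocks, so a single block may contain many cut vertices and the block-cut tree may branch arbitrarily (a central $K_4$ with a triangle glued to each of its four vertices is loose). Hence the ``peel off an end block'' induction does not even reach the whole family, and in any case interlacing gives $\lambda_2(H)\le\lambda_2(G)$ for induced subgraphs $H$, which transfers the bound from larger graphs to smaller ones --- the wrong direction for such an induction. Similarly, for block stars and for loose block graphs both the number of blocks and the block sizes are unbounded, so there is no fixed-order quotient matrix: the number of cells of any equitable partition grows with the number of blocks, and your ``$-2$ from across-block differences'' only works when the blocks have equal size. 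Finally, the inequality is asymptotically sharp --- already for stars $S_n$, which are block stars, $\lambda_2(S_n)\to-\frac{1}{2}$ from below --- so the verification for the infinite families cannot be done by rough estimates; one-parameter quotient-polynomial computations of the kind this paper carries out for $B^{\theta}_k$, $SP^t$ and $K_s(2,1)$ do not scale to these multi-parameter families, and that is exactly where the proof in \cite{XLS} does its real work.
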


Let $\mathcal{G}$ denote the set of unicyclic graphs with at least $5$ vertices obtained from $C_3=uvw$ by attaching a pendant path at $u,v,w$, respectively.  Denote by $S_n^+$  the $n$-vertex unicyclic graph obtained by adding an edge to the star $S_n$, where $n\ge 3$.

\begin{Lemma}\cite{XZ}\label{XZ}
Let $G$ be a unicyclic graph, where $G\notin \mathcal{G}$. Then $\lambda_2(G)<-\frac{1}{2}$ if and only if $G\cong S_n^+$ for $n\ge3$, or one of the six graphs $U_1,\dots, U_6$ shown in Fig. \ref{ff2}.
\end{Lemma}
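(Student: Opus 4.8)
The whole argument rests on interlacing through isometric subgraphs. Call an induced subgraph $H$ of $G$ \emph{isometric} if $d_H(x,y)=d_G(x,y)$ for all $x,y\in V(H)$; then $D(H)$ is a principal submatrix of $D(G)$, and Cauchy interlacing yields $\lambda_2(G)\ge\lambda_2(H)$. Hence, once we assemble a family of ``forbidden'' graphs, each occurring isometrically and each satisfying $\lambda_2\ge-\tfrac12$, no $G$ with $\lambda_2(G)<-\tfrac12$ can contain any of them isometrically. In a unicyclic graph the unique cycle, every attached subtree, and every initial segment of an attached pendant path are isometric, so this tool controls exactly the configurations we need.

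\textbf{Sufficiency (the listed graphs satisfy $\lambda_2<-\tfrac12$).} For the six sporadic graphs $U_1,\dots,U_6$ I would compute the distance characteristic polynomial directly (exploiting any symmetry to reduce the order) and localize its roots to check $\lambda_2<-\tfrac12$. For the infinite family $S_n^+$, the triangle $cab$ with $n-3$ pendant edges at $c$, I would use the equitable partition $\{c\},\{a,b\},\{\text{leaves}\}$. The eigenvectors orthogonal to this partition produce the eigenvalue $-2$ from leaf differences (multiplicity $n-4$) and $-1$ from the $a,b$ difference, both below $-\tfrac12$; the remaining three eigenvalues are those of the $3\times3$ quotient matrix $B_n$, one of which is the Perron value. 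It then suffices to show the second-largest eigenvalue of $B_n$ stays strictly below $-\tfrac12$ for every $n\ge3$, which I would establish uniformly in $n$ from the sign of $\det\!\big(B_n+\tfrac12 I\big)$ together with $\operatorname{tr}(B_n)$, rather than asymptotically.

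\textbf{Necessity.} Assume $\lambda_2(G)<-\tfrac12$ with $G$ unicyclic and $G\notin\mathcal G$. First, the unique cycle $C_g$ is isometric, and a look at the circulant spectrum of $D(C_g)$ shows $\lambda_2(C_g)\ge-\tfrac12$ for every $g\ge4$ (for instance $\lambda_2(C_4)=\lambda_2(C_6)=0$ and $\lambda_2(C_5),\lambda_2(C_7)>-\tfrac12$), while $C_3$ is the lone exception with $\lambda_2=-1$; by interlacing we must have $g=3$. Thus $G$ is the triangle $uvw$ carrying rooted trees $T_u,T_v,T_w$. I would then introduce a short explicit list of forbidden isometric subgraphs, namely small trees with $\lambda_2\ge-\tfrac12$ and small triangle-plus-short-attachment unicyclic graphs with $\lambda_2\ge-\tfrac12$. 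I expect these to force that pendant paths must be short and essentially cannot coexist with any further attachment, that branching inside the $T_i$ is tightly bounded, and that loading all three of $u,v,w$ is possible only in the single-pendant-path-per-vertex shape, which is exactly $\mathcal G$ and hence excluded. After these reductions only boundedly many graphs besides $S_n^+$ remain, and a direct check identifies them as $U_1,\dots,U_6$.

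\textbf{Main obstacle.} The difficulty is concentrated at the threshold $-\tfrac12$, which is an accumulation value: the relevant eigenvalue of $S_n^+$ (and of stars generally) tends to $-\tfrac12$ from below as $n\to\infty$, so every inequality near the boundary is tight and must be settled by an exact sign computation rather than an estimate. Accordingly, the real work is (i) selecting a \emph{complete} finite set of forbidden isometric subgraphs so that ``avoids all of them'' is equivalent to ``lies in the list,'' each requiring a delicate root-localization near $-\tfrac12$, and (ii) carrying out the uniform-in-$n$ verification for $S_n^+$. Once those threshold computations are in hand, the girth reduction, the structural restrictions on the attached trees, and the final bounded case analysis are routine.
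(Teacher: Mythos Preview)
This lemma is not proved in the present paper: it carries the citation \cite{XZ} and is quoted verbatim from Xing and Zhou's earlier work, so there is no ``paper's own proof'' to compare your proposal against. The paper uses the lemma only as an input (and immediately combines it with Lemma~\ref{block} to obtain Corollary~\ref{un}).

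That said, your outline is in the right spirit and matches the methodology the paper applies elsewhere (Theorems~\ref{chordal} and \ref{MAIN}): interlacing through distance-preserving induced subgraphs, forcing girth~$3$ via Lemma~\ref{sc}, then pruning the attached trees by a finite list of forbidden configurations with $\lambda_2\ge-\tfrac12$, and handling the infinite family $S_n^+$ by an equitable partition. Two remarks. First, the proposal remains a plan rather than a proof: the substantive content of the result is precisely the explicit list of forbidden subgraphs and the case analysis that shows it is \emph{complete}, and you do not supply either; ``I expect these to force\dots'' is exactly the step that has to be carried out, not assumed. Second, be careful with the role of $\mathcal G$: the hypothesis $G\notin\mathcal G$ does not by itself say that at most two triangle vertices carry attachments---it only excludes the specific case where each of $u,v,w$ carries exactly one pendant path---so the reduction you describe (``loading all three of $u,v,w$ is possible only in the single-pendant-path-per-vertex shape'') is itself something that must be proved via the forbidden-subgraph list, and the sporadic graphs $U_1,\dots,U_6$ may well have attachments at several triangle vertices.
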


Note that any $G$ in $\mathcal{G}$ is a loose block graph, so  we have $\lambda_2(G)<-\frac{1}{2}$ by Lemma \ref{block}. Combining  Lemmas \ref{block} and \ref{XZ}, we have the following.

\begin{Corollary}\label{un}
Let $G$ be a unicyclic graph. Then $\lambda_2(G)<-\frac{1}{2}$ if and only if $G\cong S_n^+$ for $n\ge3$, $G\in \mathcal{G}$, or one of the six graphs $U_1, \dots, U_6$ shown in Fig. \ref{ff2}.
\end{Corollary}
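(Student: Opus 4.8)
The plan is to derive Corollary~\ref{un} by combining the two results already in hand: Lemma~\ref{block}, which classifies block graphs with $\lambda_2<-\frac12$, and Lemma~\ref{XZ}, which handles all unicyclic graphs lying outside the family $\mathcal{G}$. Accordingly, I would split the argument into the two cases $G\in\mathcal{G}$ and $G\notin\mathcal{G}$, treat each with the appropriate lemma, and then merge the conclusions.

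First suppose $G\in\mathcal{G}$. Here the key point is that such a $G$ is a loose block graph. Indeed, $G$ is built from a triangle $C_3=uvw$ by attaching a pendant path at each of $u,v,w$; its blocks are the triangle itself together with the copies of $K_2$ that make up the three pendant paths, so every block is a clique. Moreover each of $u,v,w$ lies in exactly the triangle and at most one further block (the first edge of its pendant path), each interior path vertex lies in exactly two blocks, and each pendant endpoint lies in exactly one block; hence every vertex lies in at most two blocks, so $G$ is loose. By Lemma~\ref{block} a loose block graph satisfies $\lambda_2<-\frac12$, so $\lambda_2(G)<-\frac12$, and $G$ is on the asserted list.

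Now suppose $G\notin\mathcal{G}$. Then Lemma~\ref{XZ} applies directly and yields $\lambda_2(G)<-\frac12$ if and only if $G\cong S_n^+$ for some $n\ge3$ or $G$ is one of $U_1,\dots,U_6$. (One should note in passing that $S_n^+$ and the graphs $U_1,\dots,U_6$ genuinely lie outside $\mathcal{G}$, since any member of $\mathcal{G}$ has at least five vertices and a pendant path at each of the three vertices of its unique triangle, whereas $S_n^+$ has pendant edges at only one triangle vertex; in any event, since we are proving an ``if and only if'' statement a harmless overlap would not affect the conclusion.) Combining the two cases, $\lambda_2(G)<-\frac12$ precisely when $G\in\mathcal{G}$, or $G\cong S_n^+$ for some $n\ge3$, or $G$ is one of $U_1,\dots,U_6$, which is exactly the claim. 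The only step requiring any real (though routine) verification is the observation that every graph in $\mathcal{G}$ is a loose block graph, so that Lemma~\ref{block} may legitimately be invoked; everything else is a direct case split on membership in $\mathcal{G}$.
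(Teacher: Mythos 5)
Your proposal is correct and follows essentially the same route as the paper: the paper likewise observes that every graph in $\mathcal{G}$ is a loose block graph, applies Lemma~\ref{block} to that case, and combines this with Lemma~\ref{XZ} for $G\notin\mathcal{G}$. Your verification that members of $\mathcal{G}$ are loose block graphs simply spells out the detail the paper leaves to the reader.
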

\begin{figure}[htb]
  \centering
  \includegraphics[width=10cm]{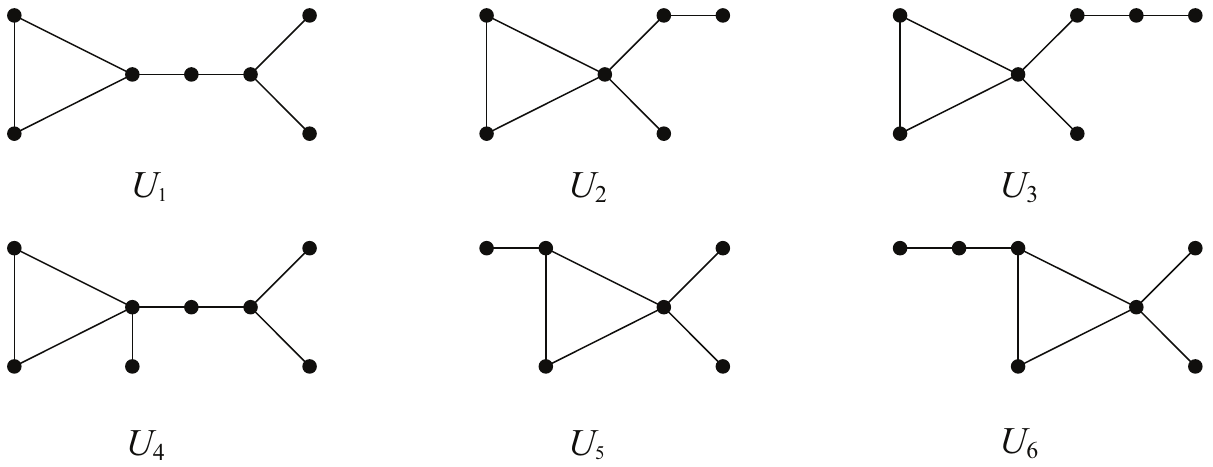}\\
  \caption{Graphs $U_1, \dots, U_6$.}\label{ff2}
\end{figure}

For an $n\times n$ real symmetric matrix $M$, let $\rho_1(M)\ge \ldots\ge\rho_n(M)$ be the eigenvalues of $M$.  The following lemma is the classical Cauchy's interlacing theorem, see \cite[Theorem 4.3.28]{HJ} or \cite{H}.

\begin{Lemma}[Cauchy's interlacing theorem] \label{Interlacing}
Let $A$ be an $n\times n$  symmetric matrix. If $B$ is an $m\times m$ principal submatrix of $A$, then $\rho_i(A)\ge \rho_i(B)\ge\rho_{n-m+i}(A)$ for $1\le i\le m$.
\end{Lemma}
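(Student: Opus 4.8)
The plan is to deduce both inequalities from the Courant--Fischer variational characterization of the eigenvalues of a real symmetric matrix, namely that for an $n\times n$ symmetric matrix $A$ and $1\le i\le n$,
\[
\rho_i(A)=\max_{\substack{S\subseteq\mathbb{R}^n\\ \dim S=i}}\ \min_{\substack{x\in S\\ x\ne 0}}\frac{x^\top A x}{x^\top x}
=\min_{\substack{S\subseteq\mathbb{R}^n\\ \dim S=n-i+1}}\ \max_{\substack{x\in S\\ x\ne 0}}\frac{x^\top A x}{x^\top x}.
\]
First I would reduce to the case in which $B$ is the leading $m\times m$ principal block of $A$: simultaneously permuting the rows and columns of $A$ is an orthogonal similarity, so it changes neither the spectrum of $A$ nor that of the selected principal submatrix, and hence one may assume $B=A[\{1,\dots,m\}]$. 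Let $W=\{(y,0)^\top:y\in\mathbb{R}^m\}\subseteq\mathbb{R}^n$, an $m$-dimensional subspace; for $x=(y,0)^\top\in W$ one has $x^\top A x=y^\top B y$ and $x^\top x=y^\top y$, and more generally every subspace $S\subseteq\mathbb{R}^m$ corresponds to a subspace $\widehat S\subseteq W$ of the same dimension on which these two Rayleigh quotients coincide.

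For the upper bound $\rho_i(B)\le\rho_i(A)$, I would take an $i$-dimensional subspace $S^\ast\subseteq\mathbb{R}^m$ attaining the ``max--min'' expression for $B$; then $\widehat{S^\ast}\subseteq\mathbb{R}^n$ has dimension $i$, so $\rho_i(A)\ge\min_{0\ne x\in\widehat{S^\ast}}x^\top A x/x^\top x=\min_{0\ne y\in S^\ast}y^\top B y/y^\top y=\rho_i(B)$. For the lower bound $\rho_i(B)\ge\rho_{n-m+i}(A)$, I would instead use the ``min--max'' expression: choose an $(m-i+1)$-dimensional subspace $T^\ast\subseteq\mathbb{R}^m$ attaining $\rho_i(B)=\min_{\dim T=m-i+1}\max_{0\ne y\in T}y^\top B y/y^\top y$. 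Since $\dim\widehat{T^\ast}=m-i+1=n-(n-m+i)+1$, it is admissible in the min--max formula for $\rho_{n-m+i}(A)$, whence $\rho_{n-m+i}(A)\le\max_{0\ne x\in\widehat{T^\ast}}x^\top A x/x^\top x=\max_{0\ne y\in T^\ast}y^\top B y/y^\top y=\rho_i(B)$. Chaining the two inequalities gives $\rho_i(A)\ge\rho_i(B)\ge\rho_{n-m+i}(A)$ for $1\le i\le m$.

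There is no substantial obstacle, the statement being classical; the only points requiring attention are the index bookkeeping in the two Courant--Fischer formulas (so that the dimension $n-(n-m+i)+1$ really does equal $m-i+1$) and the harmless reduction of an arbitrary principal submatrix to a leading block. An alternative route, if one prefers to avoid quoting Courant--Fischer, is to prove the single-deletion case $m=n-1$ directly---for instance from the sign pattern of the characteristic polynomial of $A$ evaluated at the eigenvalues of $B$, or via the secular equation for a rank-one perturbation---and then iterate the deletion $n-m$ times; the variational argument above is, however, the shortest.
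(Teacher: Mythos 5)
Your proof is correct: the reduction of an arbitrary principal submatrix to a leading block by a simultaneous permutation (an orthogonal similarity) is harmless, the embedding of subspaces of $\mathbb{R}^m$ into $\mathbb{R}^n$ preserves the relevant Rayleigh quotients, and the index bookkeeping $n-(n-m+i)+1=m-i+1$ in the min--max formula is right, so both inequalities follow. Note that the paper itself offers no proof of this lemma --- it is quoted as the classical Cauchy interlacing theorem with references to Horn--Johnson and Haemers --- and your Courant--Fischer argument is precisely the standard proof given in those sources, so there is nothing to reconcile beyond that.
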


If $H$ is a connected induced subgraph of $G$ and $d_H(u,v)=d_G(u,v)$ for all $\{u,v\}\subseteq V(H)$, then we say $H$ is a distance-preserving subgraph of $G$. In this case, $D(H)$ is a principal submatrix of $D(G)$, so $\lambda_2(G)\ge\lambda_2(H)$ by Lemma \ref{Interlacing}. Specially, if $G$ is a connected graph with $\lambda_2(G)<-\frac{1}{2}$, then $\lambda_2(H)<-\frac{1}{2}$ for any  distance-preserving subgraph $H$ of $G$. In this paper, we are concerned with the graphs $G$ with $\lambda_2(G)<-\frac{1}{2}$.
So, we call a graph $H$ a forbidden subgraph of $G$ if $H$ is a distance-preserving subgraph of $G$ but
$\lambda_2(H)\ge-\frac{1}{2}$. We show some forbidden subgraphs $F_1, \dots, F_{13}$ in Fig. \ref{ff3} that we need in the proofs,  where the second largest distance eigenvalue is also listed below the corresponding graph.
\begin{figure}[ht]
  \centering
  \includegraphics[width=14cm]{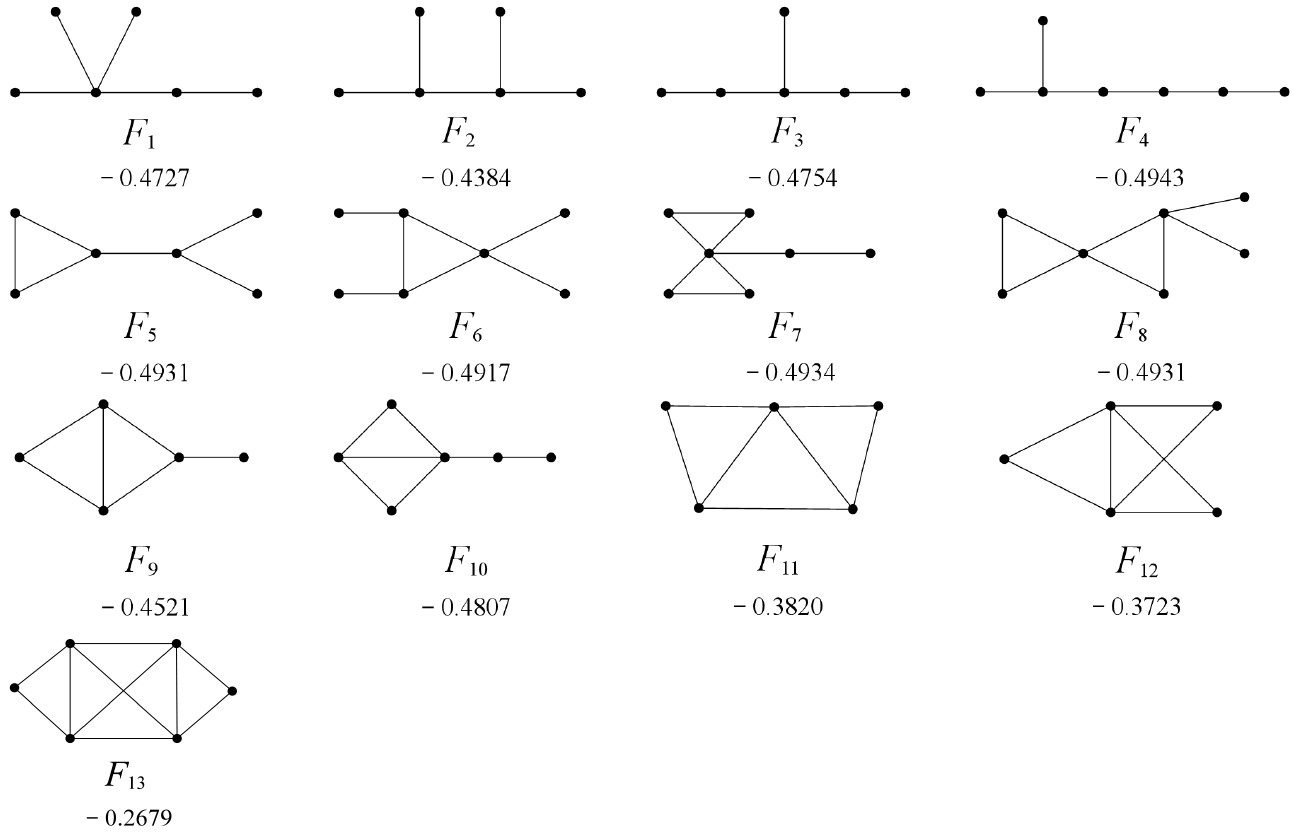}\\
  \caption{Forbidden subgraphs $F_1, \dots, F_{13}$.}\label{ff3}
\end{figure}

The following lemma may be checked easily.

\begin{Lemma}\label{sc} \cite{GJS}
For the cycle $C_n$ on $n\ge 3$ vertices,
\[\lambda_2(C_n)=
  \begin{cases}
  0 & \text{if $n$ is even,}\\
  -\frac{1}{4}\sec^2\frac{\pi}{n} & \text{if $n$ is odd.}
  \end{cases}
\]
\end{Lemma}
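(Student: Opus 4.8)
The plan is to diagonalise $D(C_n)$ explicitly, exploiting that it is a circulant matrix, and then read off the second largest eigenvalue. Label the vertices of $C_n$ cyclically as $v_0,v_1,\dots,v_{n-1}$, so $d_{C_n}(v_i,v_j)=\min(|i-j|,\,n-|i-j|)$. Then $D(C_n)$ is the circulant matrix with first row $(c_0,c_1,\dots,c_{n-1})$, $c_j=\min(j,n-j)$, and hence its eigenvalues are
\[
\mu_k=\sum_{j=0}^{n-1}c_j\,\omega^{jk},\qquad k=0,1,\dots,n-1,
\]
where $\omega=e^{2\pi\mathrm{i}/n}$. Each $\mu_k$ is real (the matrix is symmetric), and $\mu_k=\mathrm{Re}\,\mu_k=\sum_{j}c_j\cos\frac{2\pi jk}{n}<\sum_j c_j=\mu_0$ for $k\neq 0$, so $\mu_0$ is strictly the largest eigenvalue and $\lambda_2(C_n)=\max_{1\le k\le n-1}\mu_k$.

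To compute $\mu_k$ for $k\neq 0$, put $f(z)=\sum_{j=0}^{n-1}c_jz^j$ and note that, modulo $z^n-1$,
\[
(2-z-z^{-1})\,f(z)=\sum_{j=0}^{n-1}\bigl(2c_j-c_{j-1}-c_{j+1}\bigr)z^j,
\]
indices taken cyclically. Since $(c_j)$ increases in unit steps and then decreases in unit steps around the cycle, the discrete second difference $2c_j-c_{j-1}-c_{j+1}$ vanishes except at $j=0$ and at the one or two points adjacent to $j=n/2$, and a short computation shows the right-hand side equals $2\bigl(z^{n/2}-1\bigr)$ when $n$ is even and $z^{(n-1)/2}(1+z)-2$ when $n$ is odd. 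As $2-z-z^{-1}=-z^{-1}(z-1)^2$ equals $4\sin^2\frac{\pi k}{n}\neq 0$ at $z=\omega^k$ for $k\neq 0$, we may divide. Substituting $z=\omega^k$ and using $\omega^{kn/2}=e^{\mathrm{i}\pi k}=(-1)^k$ when $n$ is even, or $\omega^{k(n-1)/2}+\omega^{k(n+1)/2}=2(-1)^k\cos\frac{\pi k}{n}$ when $n$ is odd, together with half-angle identities, one finds: for $n$ even, $\mu_k=0$ if $k$ is even and $\mu_k=-\csc^2\frac{\pi k}{n}$ if $k$ is odd; for $n$ odd, $\mu_k=-\tfrac14\sec^2\frac{\pi t}{n}$ if $k=2t$ is even and $\mu_k=-\tfrac14\csc^2\frac{\pi k}{2n}$ if $k$ is odd.

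It remains to maximise over $k=1,\dots,n-1$. If $n$ is even then $n\ge 4$, so $k=2$ is an admissible even index with $\mu_2=0$, while every other index gives a nonpositive value; hence $\lambda_2(C_n)=0$. If $n$ is odd, every $\mu_k$ with $k\neq 0$ is $-\tfrac14$ times a value of $\sec^2$ or $\csc^2$ at an angle in $(0,\tfrac{\pi}{2})$; as $\sec^2$ is increasing and $\csc^2$ decreasing there, the even family is largest at $t=1$, giving $-\tfrac14\sec^2\frac{\pi}{n}$, and the odd family is largest at $k=n-2$, giving $-\tfrac14\csc^2\bigl(\tfrac{\pi}{2}-\tfrac{\pi}{n}\bigr)=-\tfrac14\sec^2\frac{\pi}{n}$ by $\csc(\tfrac{\pi}{2}-x)=\sec x$. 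Therefore $\lambda_2(C_n)=-\tfrac14\sec^2\frac{\pi}{n}$, as claimed.

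The only delicate point will be the bookkeeping for odd $n$: keeping the two parity classes of indices apart, and recognising --- via $\csc(\tfrac{\pi}{2}-x)=\sec x$ --- that they share the common maximum $-\tfrac14\sec^2\frac{\pi}{n}$. Everything else is a direct circulant computation, which is why the lemma ``may be checked easily.''
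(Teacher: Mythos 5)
Your proposal is correct: the circulant diagonalisation, the second-difference identity giving $2(z^{n/2}-1)$ for even $n$ and $z^{(n-1)/2}(1+z)-2$ for odd $n$, the resulting eigenvalue formulas, and the maximisation over the two parity classes (with $\csc\bigl(\tfrac{\pi}{2}-x\bigr)=\sec x$) all check out, and they reproduce e.g. $\lambda_2(C_3)=-1$ and $\lambda_2(C_5)=\tfrac{-3+\sqrt{5}}{2}$ as used elsewhere in the paper. The paper gives no proof of this lemma, merely citing Graovac--Jashari--Strunje, and your circulant computation is precisely the standard argument behind that citation, so it is essentially the same approach, just written out in full.
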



Let $J_{s\times t}$ be the $s\times t$ matrix of all $1$'s, and $I_s$ the identity matrix of order $s$. For convenience, let $J_s=J_{s\times s}$.

\section{Any graph $G$ with $\lambda_2(G)<-\frac{1}{2}$ must be chordal}

In this section, we show that any  graph whose  second largest distance eigenvalue is less than $-\frac{1}{2}$ must be chordal.

\begin{Theorem}\label{chordal}  Suppose that $G$ is a graph with $\lambda_2(G)<-\frac{1}{2}$, then $G$ is chordal.
\end{Theorem}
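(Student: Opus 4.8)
The plan is to prove the contrapositive: if $G$ is \emph{not} chordal, then $\lambda_2(G)\ge-\frac{1}{2}$. So suppose $G$ contains an induced cycle $C_k$ with $k\ge 4$; since $G$ is connected this induced cycle sits inside $G$, and I want to exhibit a distance-preserving subgraph $H$ of $G$ with $\lambda_2(H)\ge-\frac{1}{2}$, i.e.\ a forbidden subgraph in the sense defined just before Lemma \ref{sc}. The first reduction is to handle the case where the induced cycle itself is distance-preserving: if $C_k$ (with $k\ge 4$) is a distance-preserving subgraph of $G$, then by Lemma \ref{Interlacing} we get $\lambda_2(G)\ge\lambda_2(C_k)$, and by Lemma \ref{sc}, $\lambda_2(C_k)=0$ when $k$ is even and $\lambda_2(C_k)=-\frac14\sec^2\frac{\pi}{k}\ge-\frac14\sec^2\frac{\pi}{5}>-\frac12$ when $k\ge5$ is odd (one checks $\sec^2\frac{\pi}{5}<2$). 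In either case $\lambda_2(C_k)>-\frac12$, done.

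The real work is therefore when $G$ has an induced $C_k$ ($k\ge4$) but no induced cycle of length $\ge4$ is distance-preserving. The idea is to take a \emph{shortest} induced cycle $C$ of length $\ge 4$ in $G$, say $C=v_1v_2\cdots v_k v_1$, and use its minimality. Because $C$ is induced, distances measured within $C$ are the "obvious" ones, but in $G$ there may be shortcuts through vertices outside $C$. I would pick two vertices $v_i,v_j$ on $C$ at $C$-distance $\ge 2$ whose $G$-distance is strictly smaller, and take a shortest $v_i$–$v_j$ path $P$ in $G$; then $C$ together with $P$ contains a shorter induced cycle unless $P$ is very short, which forces $d_G(v_i,v_j)=2$ with a common neighbor $x\notin V(C)$ that is adjacent to exactly the two vertices $v_i,v_j$ of $C$ (any additional adjacency of $x$ to $C$ would again create a shorter induced cycle, using $k\ge4$). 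Analyzing which pairs $v_i,v_j$ such an $x$ can be attached to, and how many such $x$'s there are, will reduce $G$ to a short list of small configurations built from $C_4$, $C_5$, $C_6$, $C_7$ plus a few extra vertices each adjacent to two cycle vertices — these are exactly the graphs $F_1,\dots,F_{13}$ of Fig. \ref{ff3}. For each surviving configuration one verifies it is distance-preserving in $G$ (distances are realized inside it) and reads off from Fig. \ref{ff3} that its second largest distance eigenvalue is $\ge-\frac12$, hence by Lemma \ref{Interlacing} so is $\lambda_2(G)$.

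Concretely, the bookkeeping splits on the length $k$ of the shortest long induced cycle. For $k=4$, $C_4$ itself has $\lambda_2=0$ but need not be distance-preserving only if it is not induced, which it is, so actually $C_4$ induced already gives $\lambda_2(G)\ge 0$; more carefully, an induced $C_4$ is automatically distance-preserving because its diameter is $2$ and two opposite vertices at $C$-distance $2$ could only be closer in $G$ by being adjacent, contradicting inducedness — so the $k=4$ case is immediate. For $k=5,6,7$ one similarly checks that an induced $C_k$ with $k\le 7$ is distance-preserving (diameter small, shortcuts would violate inducedness or minimality of $k$), and Lemma \ref{sc} finishes $k=5,7$; $k=6$ gives $\lambda_2(C_6)=0$. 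For $k\ge 8$, an induced $C_k$ is \emph{not} in general distance-preserving, so here one genuinely needs the shortcut analysis above: a shortcut path of length $\le \lfloor k/2\rfloor -1$ between two cycle vertices creates a shorter induced cycle of length $\ge 4$ unless that shorter cycle has length $3$, and tracking the length-$3$ case pins down the attached vertex $x$ and its neighbourhood, producing one of the $F_i$'s.

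I expect the main obstacle to be the case analysis for $k\ge 8$: ruling out that \emph{every} shortcut only ever produces triangles (length-$3$ cycles) and never a shorter $C_{\ge4}$, and then showing that the resulting "cycle plus degree-two chords" graph must contain one of the finitely many forbidden subgraphs $F_1,\dots,F_{13}$ as a distance-preserving subgraph. The delicate points are (i) verifying the distance-preserving property of the chosen small subgraph against all possible further edges of $G$ — one must argue that extra vertices or edges of $G$ cannot shorten distances inside the chosen $F_i$, which follows because shortening would again contradict minimality of $k$ or inducedness of $C$ — and (ii) making sure the list $F_1,\dots,F_{13}$ is genuinely exhaustive, which is a finite but somewhat intricate check on how one or two degree-two vertices can be glued onto $C_4,C_5,C_6,C_7$. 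Everything else is a direct application of Cauchy interlacing (Lemma \ref{Interlacing}) together with the eigenvalue data in Fig. \ref{ff3} and Lemma \ref{sc}.
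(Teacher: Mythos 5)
Your skeleton (shortest induced cycle of length at least four, shortcut analysis, interlacing against small forbidden subgraphs, Lemma \ref{sc} for the distance-preserving case) is the same as the paper's, but the two concrete reductions you rely on are false, and they sit exactly where the real work is. First, it is not true that a \emph{shortest} induced cycle of length $k\le 7$ is automatically distance-preserving: take the wheel obtained from an induced $C_6=v_1v_2\dots v_6$ by adding a hub $x$ adjacent to all six rim vertices. Its only induced cycle of length at least four is the rim, yet $d_G(v_1,v_4)=2<3$; minimality of $k$ gives you nothing here because the shortcut creates only triangles. The paper accordingly deduces only $k\ge 6$ from non-preservation (induced $C_4$ and $C_5$ are automatically preserved) and must then work with a non-preserved shortest cycle. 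Second, your reduction of the shortcut to a single vertex adjacent to exactly the two endpoints $v_i,v_j$ fails for the same reason: minimality of $k$ forbids shorter induced cycles of length at least four, \emph{not} triangles, so the shortest $v_i$--$v_j$ path can have any length $s\ge 2$, and the subgraph spanned by it together with the cycle arc can be completely triangulated by chords without any shorter induced $C_{\ge 4}$ appearing (again the wheel: $x$ is adjacent to every rim vertex). Hence ``any additional adjacency of $x$ to $C$ would create a shorter induced cycle'' is wrong, and the case $s\ge 3$ cannot be dismissed.

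What the paper does at this point---and what your sketch defers as ``bookkeeping'' and ``the main obstacle''---is the actual proof: choose the shortcut pair $\{x,y\}$ with $d_G(x,y)=s$ minimum, set $G'=G[V(P)\cup V(Q)]$ where $P$ is a shortest $x$--$y$ path and $Q$ the corresponding arc of $C_k$; since $|V(G')|=s+t<k$, minimality of $k$ forces $G'$ to be chordal, and the choices of $C_k$ and $P$ force every chord of the Hamiltonian cycle of $G'$ to join the interior of $P$ to the interior of $Q$. A step-by-step chord-propagation argument (using the minimality of $s$ to certify distance-preservation) then shows that either one of just three small graphs $F_{11},F_{12},F_{13}$ of Fig. \ref{ff3}, each with $\lambda_2\ge -\frac{1}{2}$ (indeed $\lambda_2(F_{11})=\frac{-3+\sqrt{5}}{2}$), occurs as a distance-preserving subgraph, or an induced cycle of length at least four survives inside $G'$, contradicting its chordality. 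Your proposal neither carries out this analysis nor justifies the exhaustiveness of the forbidden-subgraph list it appeals to, and the shortcuts it takes instead are refuted by the wheel example; so, as written, the argument has a genuine gap precisely in the $k\ge 6$, non-distance-preserving case (the contrapositive versus contradiction framing is immaterial).
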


\begin{proof} We prove the theorem by contradiction. Suppose that it is not true. Then there exists a graph $G$  with $\lambda_2(G)<-\frac{1}{2}$ but $G$ is not chordal. So there is an induced cycle in $G$ with length at least four.  We choose such an induced cycle $C_k$ of $G$ so that its length $k$ is minimum. For odd $k\ge 5$, $-\frac{1}{4}\sec^2\frac{\pi}{k}$ is strictly increasing. By Lemmas \ref{Interlacing} and \ref{sc},  $C_k$ is not a distance-preserving subgraph of $G$, and in particular, $k\ge 6$.
So there are two non-adjacent vertices, say $u$ and $v$,  on $C_k$ such that $d_{C_k}(u,v)>d_G(u,v)$. Let
\[
\mathcal{C}=\{\{u,v\}: d_{C_k}(u,v)>\max\{d_{G}(u,v), 1\}, u,v\in V(C_k)\}.
\]
Assume that $\{x,y\}\in \mathcal{C}$ so that $d_{G}(x,y)$ is minimum.
Let $s=d_G(x,y)$ and $t=d_{C_k}(x,y)$.
Since $C_k$ is an induced cycle of $G$, we have $2\le s<t\le \frac{k}{2}$.

Let  $P=x_0x_1\dots x_s$ be a shortest path between $x$ and $y$ in $G$, where $x=x_0$ and $y=x_s$.
Let  $Q=y_0 y_1\dots y_{t}$ be the path on $C_k$ from $x$ to $y$ with length $t$, where  $x=y_0$ and $y=y_t$.
Let $G'=G[V(P)\cup V(Q)]$. Note that  $G'$ contains a Hamiltonian cycle $C=x_0x_1\dots x_sy_{t-1}\dots y_1x_0$.

\noindent \textbf{Claim 1.} $G'$ is  chordal.

Note that  $|V(G')|=s+t<2t\le k$. Thus, if $G'$ contains  an induced cycle, then its length  is  less than $k$,  so its length must be three by the choice of $C_k$.  Thus  $G'$ is  chordal.  Claim 1 follows.

By the choice of $C_k$ and $P$,
any chord of $C$ joins a vertex  from $V(Q)\setminus\{x,y\}=\{y_1,\dots, y_{t-1}\}$ and a vertex  from $V(P)\setminus\{x,y\}=\{x_1,\dots,x_{s-1}\}$.

\noindent \textbf{Claim 2.} $x_1y_1\in E(G')$.

If $x_1y_1\notin E(G')$, then there is an induced cycle of length at least four (and less than $k$) containing $x$ in $G'$, contradicting Claim 1. This proves Claim 2.

By Claim 2, $G'-x_0$ contains a Hamiltonian cycle $C'=x_1\dots x_sy_{t-1}\dots y_1x_1$.
By Claims 1 and 2,  $G'-x_0$ is chordal.

\noindent \textbf{Case 1.} $C'$ has a chord incident with $x_1$.

Since $G'-x_0$ is  chordal, it has no induced cycle of length at least four, so $x_1y_2\in E(G')$. If $x_1y_3\in E(G')$, then
$G'[\{x_0,x_1,y_1,y_2,y_3\}]\cong F_{11}$, which is a forbidden subgraph, a contradiction. This shows that $t\ge 4$ and  $x_1y_3\notin E(G')$.
Then, step by step, we have  $x_1y_i\notin E(G')$ for $i=4,\dots, t-1$, as, otherwise, there is an induced cycle of length at least four in $G'$, contradicting Claim 1. If $s=2$, then $y_2y_3\dots y_tx_1y_2$ is an induced cycle of length at least four in $G'$, contradicting Claim 1. So $s\ge 3$.

\noindent \textbf{Case 1.1} $x_2y_1\in E(G')$.

If $x_2y_2\notin E(G')$, then $G'[\{x_0,x_1,x_2,y_1,y_2\}]\cong F_{12}$, which is a forbidden subgraph,  a contradiction. So $x_2y_2\in E(G')$.

If $x_2y_3\in E(G')$, then $G'[\{x_0,x_1,x_2,y_1,y_2,y_3\}]\cong F_{13}$, which is a forbidden subgraph,  also a contradiction. So $x_2y_3\notin E(G')$. Step by step, we have  $x_2y_i\notin E(G')$ for $i=4,\dots, t-1$ if $t\ge 5$. So $x_2y_i\notin E(G')$ for $3\le i\le t-1$.

Next, we claim that $x_3y_i\notin E(G')$ for $1\le i\le t-1$ if $s\ge 4$. Otherwise,
the subgraph of $G'[\{x_0,x_1,x_2,x_3,y_1\}]\cong F_{11}$ if $x_3y_1\in E(G')$,
the subgraph of $G'[\{x_0,x_1,x_2,x_3,y_1,y_2\}]\cong F_{13}$ if $x_3y_2\in E(G')$, and there is an induced cycle of length at least four in $G'$ if $x_3y_i\in E(G')$ for $3\le i\le t-1$, so we have a contradiction in any case.
Step by step, we have $x_iy_j\notin E(G')$ for $4\le i\le s-1$ and $1\le j\le t-1$.  So $G'[V(G')\setminus \{x_0,x_1,y_1\}]$ is an induced cycle of $G'$  with length at least four, a contradiction.

\noindent \textbf{Case 1.2} $x_2y_1\notin E(G')$.

In this case, $x_2y_2\notin E(G')$. Otherwise, $G'[\{x_0,x_1,x_2,y_1,y_2\}]\cong F_{11}$, which is a forbidden subgraph, a contradiction. Step by step, we have $x_2y_i\notin E(G')$ for $3\le i\le t-1$,  and $x_iy_j\notin E(G')$ for $3\le i\le s-1$ and $1\le j\le t-1$ if $s\ge 4$. So $G'[V(G')\setminus \{x_0,y_1\}]$ is an induced cycle of $G'$ with length at least four, a contradiction.

\noindent \textbf{Case 2.} $C'$ has  no chord incident with $x_1$.

Suppose that $t=3$. Then $s=2$. In the case of $t=3$ or $s=2$, $G'[V(G')\setminus\{x_0\}]$ is an induced cycle with length at least four, a contradiction. So $s\ge 3$ and $t\ge 4$.

Since $G'-x_0$ is chordal, $x_2y_1\in E(G')$.

We claim that $x_2y_i\notin E(G')$ for $2\le i\le t-1$. Otherwise, $G'[\{x_0,x_1,x_2,y_1,y_2\}]\cong F_{11}$ if $x_2y_2\in E(G')$, and there is an induced cycle of length at least four if $x_2y_i\in E(G')$ for $3\le i\le t-1$, so we have a contradiction in either case. Step by step, we have  $x_iy_j\notin E(G')$ for $3\le i\le s-1$ and $1\le j\le t-1$. So  $G'[V(G')\setminus \{x_0,x_1\}]$ is an induced cycle of length at least four, a contradiction.

Combing case 1 and 2, we complete the proof.
\end{proof}

By the above argument of Theorem \ref{chordal}, we actually prove the following strong result.

\begin{Theorem}\label{cho2}  Suppose that $G$ is a graph with $\lambda_2(G)<\frac{-3+\sqrt{5}}{2}$, then $G$ is chordal.
\end{Theorem}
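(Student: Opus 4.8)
# Proof Proposal for Theorem \ref{cho2}

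The plan is to reuse essentially verbatim the argument given for Theorem \ref{chordal}, observing that the proof there never actually uses the specific value $-\frac{1}{2}$ except through two mechanisms, both of which survive replacing $-\frac{1}{2}$ by the larger threshold $\frac{-3+\sqrt{5}}{2}$. The first mechanism is Lemma \ref{sc} together with monotonicity of $-\frac{1}{4}\sec^2\frac{\pi}{k}$ in odd $k\ge 5$, used to force the minimum induced non-triangular cycle $C_k$ to have $k\ge 6$ and to fail to be distance-preserving. The second is the list of forbidden subgraphs $F_{11}, F_{12}, F_{13}$, each of which is invoked to derive a contradiction because it is a distance-preserving induced subgraph with second largest distance eigenvalue too large. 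So the first thing I would do is restate the contradiction hypothesis: suppose $\lambda_2(G)<\frac{-3+\sqrt{5}}{2}$ but $G$ is not chordal, pick a shortest induced cycle $C_k$ with $k\ge 4$, and run the same case analysis.

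The key checkpoints I would verify explicitly are the following. First, since $\frac{-3+\sqrt{5}}{2}\approx -0.382 > -\frac{1}{2}$, the hypothesis $\lambda_2(G)<\frac{-3+\sqrt{5}}{2}$ is \emph{stronger} than $\lambda_2(G)<-\frac{1}{2}$, hence by Lemma \ref{Interlacing} every distance-preserving subgraph $H$ satisfies $\lambda_2(H)<\frac{-3+\sqrt{5}}{2}$; in particular any subgraph with $\lambda_2\ge\frac{-3+\sqrt{5}}{2}$ is forbidden. Since the graphs $F_1,\dots,F_{13}$ already have $\lambda_2\ge -\frac{1}{2}\ge\frac{-3+\sqrt{5}}{2}$ is false — wait, here I must be careful: $-\frac12 < \frac{-3+\sqrt5}{2}$, so a graph with $\lambda_2 \ge -\frac12$ need not have $\lambda_2 \ge \frac{-3+\sqrt5}{2}$. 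Thus the crucial point to check is that the \emph{particular} forbidden subgraphs actually used in the proof of Theorem \ref{chordal}, namely $F_{11}, F_{12}, F_{13}$ (and the cycles $C_k$, $k\ge 4$), all have second largest distance eigenvalue $\ge \frac{-3+\sqrt{5}}{2}$, not merely $\ge -\frac{1}{2}$. For the even cycles $\lambda_2=0$ and for odd $C_5$ one computes $\lambda_2(C_5)=-\frac14\sec^2\frac{\pi}{5}>\frac{-3+\sqrt5}{2}$; and one reads off from Fig. \ref{ff3} that $\lambda_2(F_{11}),\lambda_2(F_{12}),\lambda_2(F_{13})$ (the values printed below those graphs) exceed $\frac{-3+\sqrt5}{2}$. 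Granting this, every place the original proof says ``$F_{i}$ is a forbidden subgraph, a contradiction'' remains valid word for word.

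Concretely, the proof I would write is: ``The proof is identical to that of Theorem \ref{chordal}, upon noting that $\frac{-3+\sqrt{5}}{2}\approx -0.382$, and that each of the graphs $C_k$ for $4\le k\le 5$, and $F_{11}$, $F_{12}$, $F_{13}$ has second largest distance eigenvalue at least $\frac{-3+\sqrt{5}}{2}$ (indeed strictly greater), so that the contradictions in Claim 1, Claim 2, and Cases 1.1, 1.2, 2 go through unchanged: any distance-preserving copy of one of these graphs in $G$ would force $\lambda_2(G)\ge\frac{-3+\sqrt{5}}{2}$ by Lemma \ref{Interlacing}.'' I would then restate briefly that the monotonicity of $-\frac14\sec^2\frac{\pi}{k}$ in odd $k$, used to conclude $k\ge 6$, is unaffected.

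The main obstacle — really the only substantive point — is confirming the numerical claim that the handful of forbidden subgraphs actually cited in the Theorem \ref{chordal} proof clear the higher bar $\frac{-3+\sqrt5}{2}$ rather than only $-\frac12$; this is a finite check against Fig. \ref{ff3} and Lemma \ref{sc}, with no structural difficulty. I would also double-check that no step of the original argument silently used a forbidden subgraph whose $\lambda_2$ lies in the gap $[-\tfrac12,\tfrac{-3+\sqrt5}{2})$; scanning the proof, only $F_{11},F_{12},F_{13}$ and small cycles appear, so this is the complete list to verify.
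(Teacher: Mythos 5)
Your proposal is correct and takes essentially the same route as the paper: Theorem \ref{cho2} is obtained by rerunning the proof of Theorem \ref{chordal} with $-\frac{1}{2}$ replaced by $\frac{-3+\sqrt{5}}{2}$, after checking that the only eigenvalue inputs actually used---the cycles via Lemma \ref{sc} and the forbidden subgraphs $F_{11},F_{12},F_{13}$---still clear the higher threshold. One small correction: $\lambda_2(C_5)=\lambda_2(F_{11})=\frac{-3+\sqrt{5}}{2}$ exactly (both are the larger root of $x^2+3x+1=0$), so your parenthetical ``indeed strictly greater'' fails for these two graphs, but this is harmless because a distance-preserving subgraph with $\lambda_2\ge\frac{-3+\sqrt{5}}{2}$ already contradicts the hypothesis via Lemma \ref{Interlacing}.
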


\begin{proof} By a direct calculation, $\lambda_2(C_5)=\frac{-3+\sqrt{5}}{2}=\lambda_2(F_{11})$  is a root of the  equation $x^2+3x+1=0$.
 Replacing $-\frac{1}{2}$ in the proof of Theorem \ref{chordal} by $\frac{-3+\sqrt{5}}{2}$ and calling
a distance-preserving subgraph $H$ of $G$ forbidden if $\lambda_2(H)\ge \frac{-3+\sqrt{5}}{2}$, we complete the proof.
\end{proof}

\section{Bicyclic graphs $G$ with $\lambda_2(G)<-\frac{1}{2}$}

For integers $p$, $q$ and $s$ with $p\ge 3$, $q\ge3$ and $s\ge0$, let $\infty(p, q, s)$ be the bicyclic graph obtained from the cycles $C_p=u_1u_2 \dots u_pu_1$, $C_q=v_1 v_2\dots v_q v_1$ and the path $P_{s+1}=p_0p_1\dots p_s$ by identifying $u_1$ with $p_0$ and identifying $v_1$ with $p_s$. In particular, $\infty(p, q, 0)$ consists of two cycles of lengths $p$ and $q$ respectively with precisely one vertex in common.

For positive integers $p$, $q$ and $s$, where at most one of $p$, $q$, $s$ is equal to one,  let $\theta(p,q,s)$ be the bicyclic graph obtained from three paths $P_{p+1}=u_0u_1\dots u_{p-1}u_{p}$, $P_{q+1}=v_0v_1\dots v_{q-1} v_{q}$ and $P_{s+1}=w_0w_1\dots w_{s-1}w_{s}$ by identifying $u_0$, $v_0$ and $w_0$ to a new vertex $x$ and identifying $u_{p}$, $v_{q}$ and $w_{s}$ to a new vertex $y$.

Graphs $\infty(p,q,s)$ and $\theta(p,q,s)$ are depicted in Fig. \ref{ff4}.
\begin{figure}[htbp]
  \centering
  \includegraphics[width=13cm]{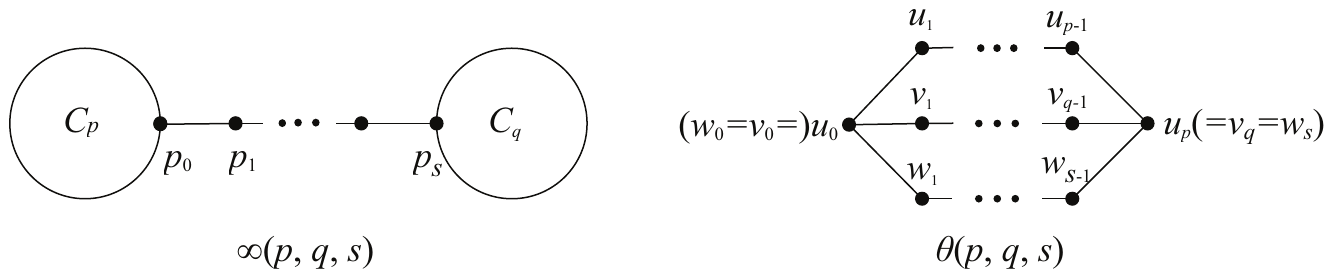}\\
  \caption{Graphs $\infty(p,q,s)$ and $\theta(p, q, s)$.}\label{ff4}
\end{figure}

For a bicyclic graph $G$, if $\infty(p, q, s)$ is an induced subgraph of $G$ for some $p,q,s$, then we say $G$ is a $\infty$-bicyclic graph. Otherwise, $G$ contains $\theta(p', q', s')$ as an induced subgraph for some $p',q',s'$, so we say  $G$ is a $\theta$-bicyclic graph.

To state the results, we define several families of bicyclic graphs.

Let $u_1$, $u_2$, $u_3$, $u_4$ be the four vertices with degree two in the cycles of $\infty(3,3,s)$ with $u_1u_2,u_3u_4\in E(\infty(3,3,s))$, where $s\ge 0$. We use $B(s;h_1,h_2,h_3,$ $h_4)$ to denote the graph obtained from $\infty(3,3,s)$ by attaching a pendant path of length $h_i$ at $u_{i}$, respectively, where $h_i\ge0$ for $1\le i\le4$, see Fig. \ref{ff5}.

\begin{figure}[htbp]
  \centering
  \includegraphics[width=12cm]{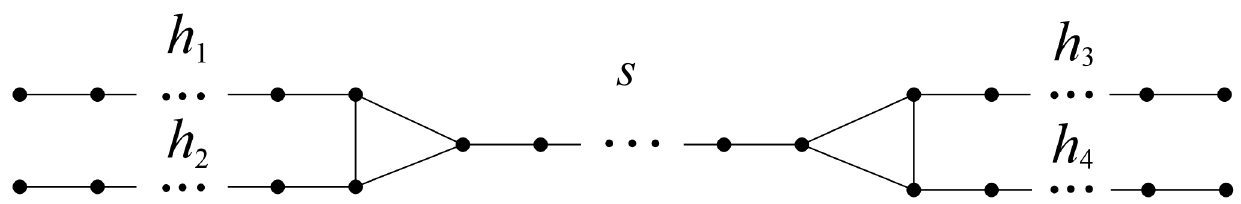}\\
  \caption{Graph $B(s;h_1,h_2,h_3,h_4)$.}\label{ff5}
\end{figure}
Let $B^{\infty}_t$ be the graph obtained from $\infty(3,3,0)$ by attaching $t$ pendant edges at the vertex with maximum degree, where $t\ge 0$, as depicted in Fig. \ref{ff6} (left).
We use $B^{\theta}_k$ to denote the graph obtained from $\theta(2,2,1)$ by attaching $k$ pendant edges at a vertex of degree three in $\theta(2,2,1)$, where $k\ge 0$, as depicted in Fig. \ref{ff6} (right).

\begin{figure}[htbp]
  \centering
  \includegraphics[width=9cm]{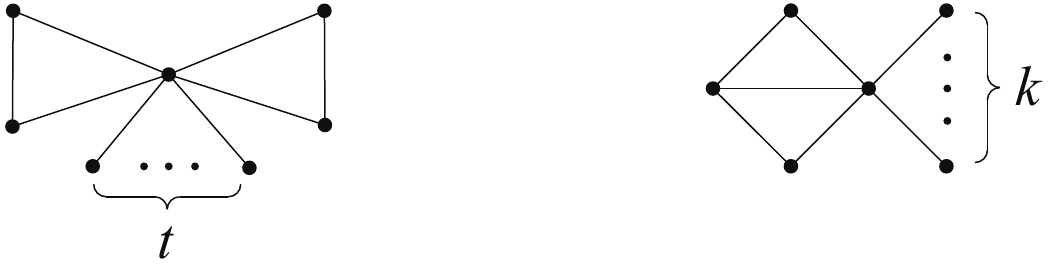}\\
  \caption{Graphs $B^{\infty}_t$ and $B^{\theta}_k$.}
  \label{ff6}
\end{figure}

\begin{Theorem} \label{MAIN}
Let $G$ be a bicyclic graph. Then $\lambda_2(G)<-\frac{1}{2}$ if and only if
\begin{itemize}
   \item $G\cong B_1, \dots, B_7$ as displayed in Fig. \ref{ff7}, or
   \item $G\cong B(s;h_1,h_2,h_3,h_4)$, where $s\ge 0$ and $h_i\ge0$ for $1\le i\le4$, or
   \item $G\cong B^{\infty}_t$ with $t\ge0$, or
   \item $G\cong B^{\theta}_k$, where $k\ge 0$.
  \end{itemize}

\begin{figure}[ht]
  \centering
  \includegraphics[width=12cm]{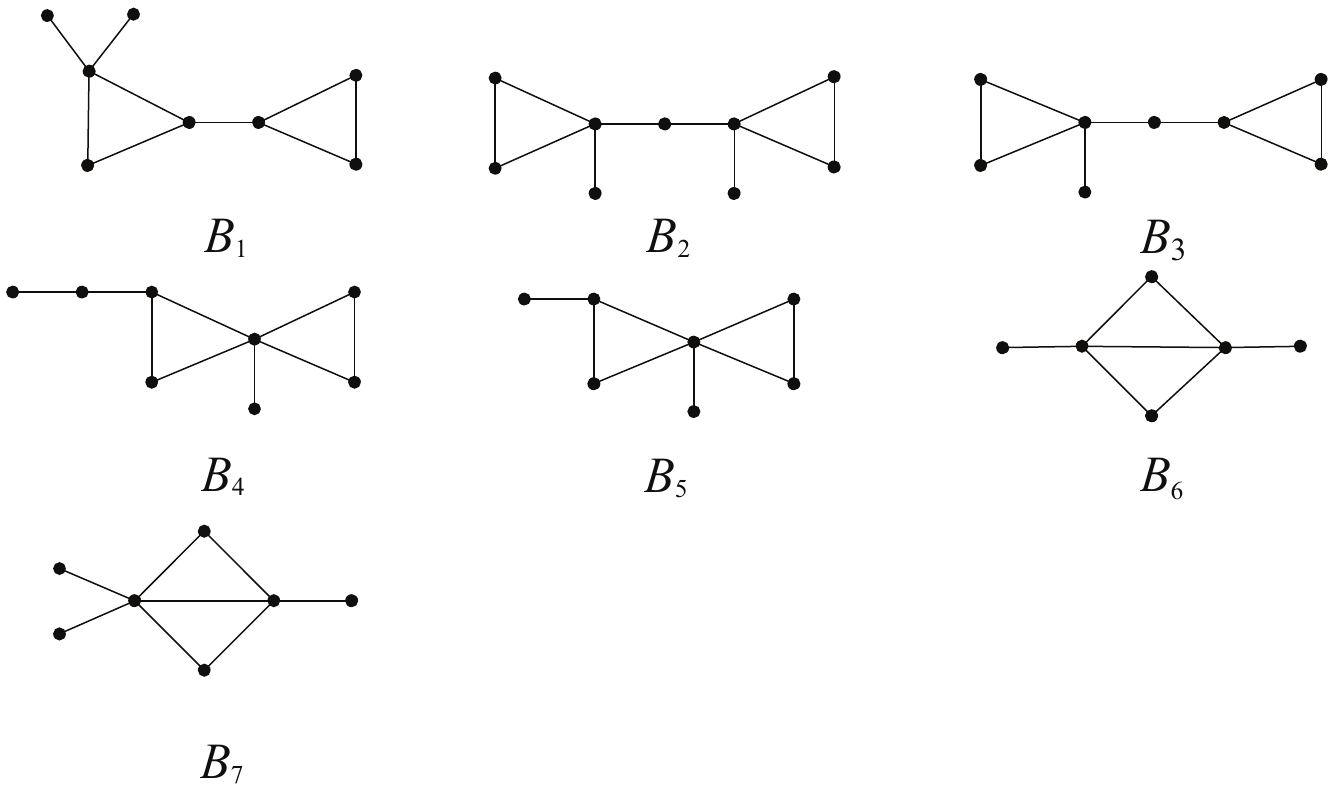}\\
  \caption{ Graphs $B_1, \dots, B_7$.}\label{ff7}
\end{figure}
\end{Theorem}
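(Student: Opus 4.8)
The characterization splits into the two structural types identified before the theorem: $\infty$-bicyclic graphs (those containing some $\infty(p,q,s)$ as an induced subgraph) and $\theta$-bicyclic graphs. I would handle these cases separately, and in each case the strategy is the same: use the forbidden subgraphs $F_1,\dots,F_{13}$ of Fig.~\ref{ff3}, together with Theorem~\ref{chordal} and Corollary~\ref{un}, to force the cycle lengths and the attachment structure to be very restrictive, and then verify the listed graphs actually satisfy $\lambda_2(G)<-\tfrac12$ by an eigenvalue/interlacing computation (equitable partitions, since all the listed families have a lot of symmetry). The "if" direction for the families $B(s;h_1,h_2,h_3,h_4)$, $B^{\infty}_t$, $B^{\theta}_k$ is a direct computation: these graphs admit natural equitable partitions (the pendant paths, the symmetric pairs of degree-two vertices, the pendant-edge bundles), so $\lambda_2$ can be bounded by computing the eigenvalues of a small quotient matrix and checking that all but the Perron eigenvalue lie below $-\tfrac12$; the finitely many graphs $B_1,\dots,B_7$ are checked individually.

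For the "only if" direction, since $G$ is bicyclic it contains exactly two independent cycles. By Theorem~\ref{chordal}, $G$ being chordal forces every induced cycle of $G$ to be a triangle, so neither $\infty(p,q,s)$ with $p\ge4$ or $q\ge4$ nor $\theta(p,q,s)$ with a "long" cycle can sit inside $G$ as an induced subgraph. First I would treat the $\infty$-case: the two triangles share either a vertex ($s=0$) or are joined by a path. If $s=0$, the base graph is $\infty(3,3,0)$; I would then analyze where pendant paths/edges may be attached. Attaching pendant paths at two vertices that are "far apart" on the two triangles quickly produces one of $F_1,\dots,F_{13}$ or an induced $C_4$; this pins the admissible attachments down to the patterns appearing in $B(0;h_1,\dots,h_4)$ and $B^{\infty}_t$ (and the sporadic $B_i$). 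If $s\ge1$, one shows the connecting path must be short and essentially no extra pendant structure is allowed beyond what $B(s;h_1,h_2,h_3,h_4)$ permits — again by exhibiting a forbidden subgraph whenever the graph is larger. The $\theta$-case is similar: chordality forces the three internal paths of $\theta(p,q,s)$ to be so short that $G$ contains an induced $\theta(2,2,1)$ (a "book" of two triangles), and then pendant structure is constrained to give exactly $B^{\theta}_k$ (plus possibly some $B_i$).

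The main obstacle will be the bookkeeping in the "only if" direction: one must argue that no pendant path of length $\ge 2$, and no second pendant path, can be attached at "most" vertices of the base bicyclic skeleton without creating a forbidden distance-preserving subgraph, and one must be careful that the candidate subgraph one writes down is genuinely \emph{distance-preserving} in $G$ (not merely induced) — this is where the hypothesis $G$ bicyclic, hence with limited redundancy in its shortest paths, gets used, typically via the observation that an induced tree or an induced subgraph meeting each cycle in a path is automatically distance-preserving. A clean way to organize this is: (i) classify the possible "cycle cores" (which are $\infty(3,3,s)$ or $\theta(2,2,1)$ up to the sporadic exceptions); (ii) show any vertex not on the core lies on a pendant path; (iii) for each core vertex, determine the maximum allowable total pendant structure at it by invoking the relevant $F_i$ or an explicit short computation, then assemble these local constraints into the global list. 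I expect step (iii), with its many small case distinctions on which pairs of core vertices can simultaneously carry pendant paths, to be the most laborious part, but each individual case is a routine check against Fig.~\ref{ff3} or a $5$-to-$7$ vertex eigenvalue computation.
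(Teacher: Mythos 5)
Your overall route is the same as the paper's: split into the $\infty$-bicyclic and $\theta$-bicyclic cases, use Theorem~\ref{chordal} to force both cycles to be triangles (so the cores are $\infty(3,3,s)$ or $\theta(2,2,1)$), and then eliminate extra pendant structure by exhibiting the distance-preserving forbidden subgraphs $F_1,\dots,F_{13}$; the sporadic graphs $B_1,\dots,B_7$ and the family $B^{\theta}_k$ are indeed handled in the paper by direct calculation resp.\ an equitable partition of $D(G)+2I_n$ with a $4\times 4$ quotient, exactly as you propose. Your caution about distance-preservation (rather than mere inducedness) of the forbidden configurations is also the right concern, and in the triangle-core setting it is routine, as in the paper.

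The one step in your plan that would fail as stated is the ``if'' direction for the family $B(s;h_1,h_2,h_3,h_4)$ (and more generally any family with pendant paths of unbounded length). These graphs do \emph{not} admit an equitable partition with a bounded number of cells: vertices at different depths along a pendant path have different distance vectors, so each such vertex is its own cell and the quotient matrix grows with $s$ and the $h_i$; with five unbounded parameters there is no ``small quotient matrix'' to check, so your proposed verification does not go through. The paper avoids this entirely by observing that $B(s;h_1,h_2,h_3,h_4)$ is a loose block graph and $B^{\infty}_t$ is a block star, and then invoking the block-graph characterization of Lemma~\ref{block} (Xue--Lin--Shu) to conclude $\lambda_2<-\frac12$ for the whole family at once; interlacing (Lemma~\ref{Interlacing}) together with $B_2\cong BGA$ similarly disposes of $B_3$ and $B_5$. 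So either import Lemma~\ref{block} at this point, or replace the quotient-matrix idea for this family by some genuinely uniform eigenvalue argument; as written, this part of your ``if'' direction is a gap.
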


Note that a bicyclic graph is either a $\infty$-bicyclic graph or a $\theta$-bicyclic graph.
To prove Theorem \ref{MAIN}, it suffices to show the following two lemmas.

\begin{Lemma}\label{bi1}
Let $G$ be a $\infty$-bicyclic graph. Then $\lambda_2(G)<-\frac{1}{2}$ if and only if $G\cong B_1,\dots,B_5$, $G\cong B(s;h_1,h_2,h_3,h_4)$, where $s\ge 0$ and $h_i\ge0$ for $1\le i\le4$, or $G\cong B^{\infty}_t$ with $t\ge0$.
\end{Lemma}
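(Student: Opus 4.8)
The plan is to prove Lemma \ref{bi1} by a structural analysis of $\infty$-bicyclic graphs, organized around which base graph $\infty(p,q,s)$ is induced and the fact that, by Theorem \ref{chordal}, any $G$ with $\lambda_2(G)<-\tfrac12$ must be chordal. First I would observe that chordality forces both cycles of the induced $\infty(p,q,s)$ to be triangles, i.e. $p=q=3$, since a cycle of length $\ge 4$ that is induced in $\infty(p,q,s)$ remains induced in $G$; and likewise the connecting path must have $s=0$, because an induced $\infty(3,3,s)$ with $s\ge 1$ contains an induced cycle of length $\ge 4$ (namely $u_1u_2\dots u_3$ together with the path), so it is not chordal --- unless one checks the small cases directly. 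Actually the cleaner route is: since $G$ is chordal, the two cycles share at least an edge; combined with $G$ being a $\infty$-bicyclic graph (not a $\theta$-bicyclic graph), the two triangles share exactly one vertex, so the ``core'' is $\infty(3,3,0)$, the bowtie. Hence every $\infty$-bicyclic $G$ with $\lambda_2(G)<-\tfrac12$ is obtained from the bowtie $\infty(3,3,0)$ by attaching trees at its five vertices (four degree-two vertices $u_1,u_2,u_3,u_4$ and the center), and moreover those trees must be paths, because attaching anything containing $S_3$ (a vertex of degree $\ge 3$ off the core, or certain branching) creates a forbidden distance-preserving subgraph among $F_1,\dots,F_{13}$.

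Next I would reduce the attached structure. Using the forbidden subgraphs in Fig. \ref{ff3}: attaching a pendant path of length $\ge 1$ at the center of the bowtie while also attaching something at a $u_i$ should produce a forbidden graph, which forces a dichotomy --- either all attachments are pendant \emph{edges at the center} (giving the family $B^\infty_t$), or there are no attachments at the center at all (giving pendant paths only at $u_1,u_2,u_3,u_4$, the family $B(s;h_1,h_2,h_3,h_4)$ --- here one must re-examine whether the notation's ``$s$'' allows a subdivided connecting path; rereading the definition, $B(s;h_1,h_2,h_3,h_4)$ is built from $\infty(3,3,s)$, so the chordality argument above must be refined: in fact $\infty(3,3,s)$ \emph{is} chordal for every $s\ge 0$ because the two triangles together with the connecting path form a graph whose only cycles are the two triangles, so no long induced cycle appears). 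So the dichotomy is really: with a nontrivial connecting path ($s\ge1$) only pendant paths at the $u_i$ are allowed, and this is exactly $B(s;h_1,h_2,h_3,h_4)$; with $s=0$ one additionally gets the possibility of pendant edges at the center, which is $B^\infty_t$, and finitely many sporadic graphs $B_1,\dots,B_5$ arising when the pendant paths at distinct $u_i$ are ``too long together.'' I would pin down these sporadic cases by listing the minimal forbidden configurations: e.g. two pendant paths each of length $2$ at $u_1$ and $u_3$ on the same side, or a pendant path of length $3$ somewhere, should be forbidden or should reduce to one of $B_1,\dots,B_5$.

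The technical core, and the step I expect to be the main obstacle, is the \emph{sufficiency} direction: showing that each graph in the claimed list actually has $\lambda_2<-\tfrac12$. For the finitely many graphs $B_1,\dots,B_5$ this is a direct eigenvalue computation. For the two infinite families $B(s;h_1,h_2,h_3,h_4)$ and $B^\infty_t$ one cannot just compute; the plan is to use an equitable-partition / quotient-matrix argument together with interlacing. Concretely, for $B^\infty_t$ the $t$ pendant vertices are mutually at distance $2$ and each at distance $1$ or $2$ from the core, so they form a set of ``twins'' in the distance matrix; the distance matrix then has the block form $D = \begin{pmatrix} D_{\mathrm{core}} & J \otimes a^\top \\ a\otimes J^\top & 2(J_t - I_t) \end{pmatrix}$ after suitable ordering, and quotienting by the twin-classes yields a small matrix $B$ whose eigenvalues, interlaced with the large eigenvalue $2(t-1) \cdot$ multiplicities and the eigenvalue $-2$ of multiplicity $t-1$, control $\lambda_2$. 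One shows $-2 < -\tfrac12$ trivially, and that the quotient matrix contributes exactly one eigenvalue $\ge -\tfrac12$ (the Perron one), independent of $t$, by a determinant sign computation. For $B(s;h_1,h_2,h_3,h_4)$ the analogous device is to peel off the pendant paths using the known behavior of $\lambda_2$ under attaching pendant paths (monotonicity-type lemmas, or direct characteristic-polynomial recursions along a path), reducing to the bounded-size graph $\infty(3,3,s)$ with single pendant edges and then invoking a uniform bound. The delicate point throughout is controlling the \emph{sign} of the relevant principal minors or the relevant entry of the characteristic polynomial at $x=-\tfrac12$ uniformly in the parameters $s, t, h_i$; I expect this to require a careful but elementary induction, and it is where most of the real work lies. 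The necessity direction, by contrast, is essentially a finite case check once chordality and the forbidden subgraphs $F_1,\dots,F_{13}$ have trimmed the possibilities down to the bowtie with path attachments.
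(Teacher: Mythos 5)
There is a genuine gap, and it is concentrated in the sufficiency direction. For the two infinite families your plan never actually closes: you propose quotient-matrix computations for $B^{\infty}_t$ and a ``peel off the pendant paths'' induction for $B(s;h_1,h_2,h_3,h_4)$, but interlacing only gives $\lambda_2(H)\le\lambda_2(G)$ for a distance-preserving subgraph $H$, i.e.\ it bounds $\lambda_2$ of subgraphs from above, so it cannot transfer $\lambda_2<-\tfrac12$ from a trimmed graph back to the graph with the pendant paths reattached; what would remain is a uniform sign analysis of characteristic polynomials in five unbounded parameters $(s,h_1,\dots,h_4)$, which you acknowledge but do not carry out, and which is the whole content of that direction. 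The paper avoids all of this with one observation you missed: every $B(s;h_1,h_2,h_3,h_4)$ is a loose block graph and every $B^{\infty}_t$ is a block star, so Lemma~\ref{block} (the cited block-graph characterization) immediately gives $\lambda_2<-\tfrac12$; the sporadic graphs are then handled by direct computation, interlacing, and the identification $B_2\cong BGA$. No equitable partition is needed for this lemma (the paper reserves that machinery for $B^{\theta}_k$ and the split-graph families).

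The necessity sketch also contains structural errors that the forbidden-subgraph case check would have to repair. Chordality does not force the two triangles to share an edge or reduce the core to the bowtie $\infty(3,3,0)$: $\infty(3,3,s)$ is chordal for every $s\ge 0$, as you later concede, so the analysis must be run for each $s$ (the paper splits into $s\ge 2$, $s=1$, $s=0$). Your resulting dichotomy ``$s\ge 1$ gives exactly $B(s;h_1,h_2,h_3,h_4)$, while $s=0$ additionally yields $B^{\infty}_t$ and the sporadic graphs'' is false: $B_1$ sits over $\infty(3,3,1)$ and $B_2,B_3$ over $\infty(3,3,2)$ (they have a degree-$4$ vertex on or at a triangle), while $B_4,B_5$ arise for $s=0$ from a center of degree $5$; and the sporadic graphs are not produced by pendant paths being ``too long together''---arbitrarily long pendant paths at all four vertices $u_1,\dots,u_4$ are allowed in $B(s;h_1,h_2,h_3,h_4)$, so ``a pendant path of length $3$ somewhere should be forbidden'' is not correct. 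So while your necessity strategy (chordality plus the forbidden graphs $F_1,\dots,F_{13}$) is the same as the paper's, the case structure as you state it would not reproduce the correct list, and the sufficiency argument as proposed is both harder than necessary and incomplete.
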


\begin{Lemma}\label{bi2}
Let $G$ be a $\theta$-bicyclic graph. Then $\lambda_2(G)<-\frac{1}{2}$ if and only if $G\cong B_6, B_7$, or $G\cong B^{\theta}_k$, where $k\ge 0$.
\end{Lemma}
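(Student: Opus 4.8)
The plan is to first fix the global shape of such a $G$ and then to eliminate oversized pendant attachments by means of the forbidden subgraphs $F_1,\dots,F_{13}$. Since $G$ is $\theta$-bicyclic it contains an induced $\theta(p,q,s)$, and because $G$ is bicyclic it is obtained from this $\theta(p,q,s)$ by attaching pendant trees. By Theorem~\ref{chordal}, $G$ — and hence its induced subgraph $\theta(p,q,s)$ — is chordal. Now $\theta(p,q,s)$ contains a chordless cycle of length $p+q$ whenever $s\ge 2$, and cyclically for the other two pairs of parameters; since at most one of $p,q,s$ equals $1$, chordality forces $\{p,q,s\}=\{1,2,2\}$, i.e. the theta part of $G$ is the diamond $\theta(2,2,1)=K_4-e$. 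Write $a,b$ for its two vertices of degree $3$ and $c,d$ for its two non-adjacent vertices of degree $2$; then $G$ is the diamond with pendant trees attached at some of $a,b,c,d$. Since $c$ and $d$ are joined in $G$ only through $a$ or $b$, the diamond is a distance-preserving subgraph of $G$, and more generally every connected induced subgraph of $G$ containing the diamond is distance-preserving — these will supply our contradictions whenever one of them coincides with some $F_i$.

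\emph{The ``if'' direction.} For the two sporadic graphs $B_6$ and $B_7$, which have a bounded number of vertices, one computes $\lambda_2$ directly and checks $\lambda_2<-\frac12$. For $B^{\theta}_k$ — the diamond with $k$ pendant edges at $a$ — note that the $k$ leaves form a set of mutual twins, as do $c$ and $d$; thus $\{a\},\{b\},\{c,d\},\{\text{leaves}\}$ is an equitable partition of $D(B^{\theta}_k)$. Consequently the eigenvalues of $D(B^{\theta}_k)$ are $-2$ with multiplicity $k-1$ (from the leaf class), $-2$ once more (from $\{c,d\}$), together with the four eigenvalues of the $4\times 4$ quotient matrix. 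I would write down this quotient matrix, compute its characteristic polynomial as a polynomial in $k$, verify that its largest root is $\lambda_1(B^{\theta}_k)$, and show that its second-largest root stays below $-\frac12$ for every $k\ge 0$ (for instance by evaluating the polynomial at $x=-\frac12$ and tracking signs, or by a monotonicity-in-$k$ estimate); together with the $-2$'s this gives $\lambda_2(B^{\theta}_k)<-\frac12$.

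\emph{The ``only if'' direction.} Assume $\lambda_2(G)<-\frac12$ with $G$ the diamond plus pendant trees as above. The argument is an elimination over the ways a pendant tree can be ``too large'': for a long enough pendant path, for enough branching, or for nontrivial trees attached at several of $a,b,c,d$ at once, one locates — inside the diamond together with a suitable piece of the trees — a distance-preserving copy of one of $F_1,\dots,F_{13}$, contradicting $\lambda_2(G)<-\frac12$. These exclusions leave only finitely many possible shapes for $G$; computing $\lambda_2$ of each survivor then shows that the graphs with $\lambda_2<-\frac12$ are exactly $B^{\theta}_k$ for all $k\ge 0$, together with $B_6$ and $B_7$.

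\emph{Expected main obstacle.} The bottleneck is the ``only if'' case analysis: systematically listing all pendant-tree attachments to the diamond and, for each oversized configuration, exhibiting the appropriate member of the finite list $F_1,\dots,F_{13}$ while checking that the exhibited subgraph really is distance-preserving. A secondary technical point is the eigenvalue bookkeeping for the infinite family $B^{\theta}_k$, in particular the claim that the relevant root of the quotient-matrix polynomial lies below $-\frac12$ uniformly in $k$.
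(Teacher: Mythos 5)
Your proposal follows the paper's route essentially step for step: chordality (Theorem \ref{chordal}) forces the theta base to be the diamond $\theta(2,2,1)$, pendant-tree attachments are then pruned via forbidden subgraphs, and the infinite family $B^{\theta}_k$ is handled by the equitable partition $\{a\},\{b\},\{c,d\},\{\text{leaves}\}$ --- exactly the partition the paper uses (its rank-$4$ computation for $D(G)+2I_n$ is the same bookkeeping as your twin argument). The structural reduction you do carry out is correct, including the useful observation that every connected induced subgraph of $G$ containing the whole diamond is distance-preserving.

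What is missing is precisely the content that turns the ``only if'' direction from a plan into a proof. The paper's elimination is short but specific: $F_9$ forbids any attachment at the two degree-two vertices of the diamond, $F_{10}$ forbids pendant paths of length at least two at the degree-three vertices (so only pendant edges survive there), and then $F_2$ and $F_1$ bound the numbers of pendant edges at the two degree-three vertices, leaving exactly $B^{\theta}_k$, $B_6$, $B_7$. Your text only asserts that ``one locates a copy of some $F_i$''; until those specific checks are exhibited (and distance-preservation is verified for any $F_i$-copy that does not contain the whole diamond, a case your blanket claim does not cover), the characterization is not established. On the spectral side, a single evaluation of the quotient quartic at $-\frac{1}{2}$ cannot show that its second-largest root lies below $-\frac{1}{2}$: a quartic with positive leading coefficient and negative value there could still have three roots above $-\frac{1}{2}$. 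The paper instead brackets all four roots by sign evaluations at several points (in the shifted variable, at $+\infty$, $7$, $\frac{3}{2}$, $\frac{3n-1}{2n}$, $0$, $-\infty$), and this, or your alternative monotonicity-plus-asymptotics route (as in the paper's treatment of $SP^t$), still has to be carried out. Finally, $k=0$ needs separate handling since the leaf cell is empty; the paper does this by a direct computation for $\theta(2,2,1)$, and interlacing from $B^{\theta}_1$ would also do.
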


Firstly, we give the proof of Lemma \ref{bi1}.

\begin{proof}[Proof of Lemma \ref{bi1}] Let $G$ be a $\infty$-bicyclic graph with $\lambda_2(G)<-\frac{1}{2}$. By Theorem \ref{chordal},  $G$ is chordal. So the two cycles of $G$ are both of length three.
Suppose without loss of generality that $C_p=u_1u_2u_3u_1$ and $C_q=v_1 v_2v_3v_1$ are the two cycles of $G$, and $P_{s+1}=p_0p_1\dots p_s$ is the shortest path connecting a vertex of $C_p$ and a vertex of $C_q$, say $u_1=p_0$ and $p_s=v_1$.

\noindent \textbf{Case 1.} $s\ge2$.

In this case, we have $d_G(p_i)=2$ for $1\le i\le s-1$. Otherwise  $F_3$ is an induced distance-preserving subgraph of $G$, contradicting that $F_3$ is a forbidden subgraph of $G$.  Suppose without loss of generality that $d_G(u_1)\ge d_G(v_1)$. Since $F_1$ is a forbidden subgraph, we have $d_G(u_1)\le 4$.
That is, $d_G(u_1)=3,4$.

Suppose first that  $d_G(u_1)=4$.  Then $s\le 2$ since $F_4$ is a forbidden subgraph. So $s=2$. Let $w$ be the unique vertex in  $N_G(u_1)\setminus\{u_2,u_3,p_1\}$. We have $d_G(w)=1$ as $F_3$ is a forbidden subgraph. Similar argument leads to $d_G(u_2)=d_G(u_3)=2$. Since $F_4$ is a forbidden subgraph, we have $d_G(v_2)=d_G(v_3)=2$. Thus, if $d_G(v_1)=4$, then $G\cong B_2$, and if $d_G(v_1)=3$, then $G\cong B_3$.

Suppose next that $d_G(u_1)=3$. As $d_G(u_1)\ge d_G(v_1)\ge 3$, one gets $d_G(v_1)=3$.  Since $F_4$ is a forbidden subgraph, we have $\max\{d_G(u_2),d_G(u_3)\}\le 3$, and if $d_G(u_i)=3$ for $i=2, 3$, then there is a pendant path at $u_i$.  Similarly, $\max\{d_G(v_2),d_G(v_3)\}\le 3$,  and if $d_G(v_i)=3$ for $i=2, 3$, then there is a pendant path at $v_i$. Hence, $G\cong B(s;h_1,h_2,h_3,h_4)$, where $s\ge 2$ and $h_i\ge0$ for $1\le i\le4$.

\noindent \textbf{Case 2.} $s=1$.

Since $F_5$ is a forbidden subgraph, we have $d_G(u_1)=d_G(v_1)=3$. Assume that $\max\{d_G(u_2),$ $d_G(u_3),d_G(v_2),d_G(v_3)\}=d_G(u_2)$. Since $F_1$ is a forbidden subgraph, we have $d_G(u_2)\le4$.

Suppose that $d_G(u_2)=4$. Let  $N_G(u_2)\setminus\{u_1,u_3\}=\{x_1,x_2\}$. We have $d_G(x_1)=d_G(x_2)=1$ as $F_3$ is a forbidden subgraph. Since $F_6$ is a forbidden subgraph, we have $d_G(u_3)=2$. Since $F_4$ is a forbidden subgraph, we have $d_G(v_2)=d_G(v_3)=2$. Hence, $G\cong B_1$.

If $d_G(u_2)=3$, then for any $w\in V(G)\setminus\{u_1,u_2,u_3,v_1,v_2,v_3\}$, one has $d_G(w)=1, 2$ since $F_4$ is a forbidden subgraph. Hence, $G\cong B(1;h_1,h_2,h_3,h_4)$, where $h_1\ge1$ and $h_i\ge 0$ for $2\le i\le4$.

If $d_G(u_2)=2$, then $G\cong \infty(3,3,1)\cong B(1;0,0,0,0)$.

\noindent \textbf{Case 3.} $s=0$.

If $d_G(u_1)=4$, then $\max\{d_G(u_2),d_G(u_3),d_G(v_2),d_G(v_3)\}\le3$ as $F_5$ is a forbidden subgraph, and  every vertex not on the cycles is of degree one or two due to the fact that $F_4$ and $F_5$ are both  forbidden subgraphs. This implies that $G\cong B(0;h_1,h_2,h_3,h_4)$, where $h_i\ge 0$ for $1\le i\le 4$.

Suppose that $d_G(u_1)\ge 5$. Let $N_G(u_1)\setminus\{u_2,u_3,v_2,v_3\}=\{x_1,\ldots, x_t\}$, where $t\ge 1$. Since $F_7$ is a forbidden subgraph, one has $d_G(x_1)=\dots=d_G(x_t)=1$. Moreover, $\max\{d_G(u_2),d_G(u_3),d_G(v_2),d_G(v_3)\}\le 3$ as $F_5$  is a forbidden subgraph.

If $d_G(u_1)\ge 6$, then, since $F_1$ is a forbidden subgraph, we have $d_G(u_2)=d_G(u_3)=d_G(v_2)=d_G(v_3)=2$, which implies that $G\cong B^{\infty}_t$, where $t\ge2$.

Suppose that $d_G(u_1)=5$.  Assume that $\max\{d_G(u_2),d_G(u_3),d_G(v_2),d_G(v_3)\}$ $=d_G(u_2)$. Since $F_5$ is a forbidden subgraph, we have $d_G(u_2)\le 3$. Then, if $d_G(u_2)=2$, then $G\cong B^{\infty}_1$. Suppose that $d_G(u_2)=3$. Since $F_3$ is a forbidden subgraph, we have $d_G(v_2)=d_G(v_3)=2$. As $F_6$ is a forbidden subgraph, we have $d_G(u_3)=2$. Let $w_1$ denote the neighbor of $u_2$ not on the cycles. Since $F_5$ is a forbidden subgraph, we have $d_G(w_1)\le 2$. Then $G\cong B_5$ if $d_G(w_1)=1$.
If $d_G(w_1)=2$, then denoting by $w_2$ the neighbor of $w_1$ different from $u_2$, one has $d_G(w_2)=1$ as $F_4$ is a forbidden subgraph, which implies that $G\cong B_4$. Hence, $G\cong B_4,B_5$, or $B^{\infty}_1$.

Combining Cases 1--3, we have  $G\cong B_1, \dots, B_5$, or $G\cong B(s;h_1,h_2,h_3,h_4)$, where $s\ge 0$ and $h_i\ge0$ for $1\le i\le4$, or $G\cong B^{\infty}_t$ with $t\ge0$.

Conversely, suppose that
$G\cong B_1, \dots, B_5$, or $G\cong B(s;h_1,h_2,h_3,h_4)$, where $s\ge 0$ and $h_i\ge0$ for $1\le i\le4$, or $G\cong B^{\infty}_t$ with $t\ge0$.
By a direct calculation, we have $\lambda_2(B_1)\approx-0.5110<-\frac{1}{2}$, $\lambda_2(B_4)\approx-0.5023<-\frac{1}{2}$. Since $B_5$ is an induced distance-preserving subgraph of $B_4$. By Lemma \ref{Interlacing}, we have $\lambda_2(B_5)\le \lambda_2(B_4)<-\frac{1}{2}$.
Note that $B_2\cong BGA$ and $B_3$ is an induced distance-preserving subgraph of $BGA$. By Lemma \ref{block}, we have $\lambda_2(B_3)\le \lambda_2(B_2)<-\frac{1}{2}$.
If $G\cong B(s;h_1,h_2,h_3,h_4)$, where $s\ge 0$ and $h_i\ge0$ for $1\le i\le4$,
then $G$ is a loose block graph; If $G\cong B^{\infty}_t$ with $t\ge0$, then $G$ is a block star. In either case, we have by Lemma \ref{block} that $\lambda_2(G)<-\frac{1}{2}$.
\end{proof}

Next, we move to give proof of Lemma \ref{bi2}.

\begin{proof}[Proof of Lemma \ref{bi2}]  By Theorem \ref{chordal}, $G$ is chordal, so $\theta(2, 2, 1)$ is an induced subgraph of $G$. Let $u_0$ and $u_2$ be the two vertices of  $\theta(2, 2, 1)$ with degree three, and $u_1$ and $v_1$ the two vertices of  $\theta(2, 2, 1)$ with degree two.
Since $F_9$ is a forbidden subgraph, one gets $d_G(u_1)=d_G(v_1)=2$. Since $F_{10}$ is a forbidden subgraph, there can only be  some pendant edges at $x$ or $y$. Assume that $d_G(x)\ge d_G(y)$. Then  $d_G(y)\le4$, as otherwise, there would be a forbidden subgraph $F_2$.

If $d_G(y)=4$, then, since $F_1$ is a forbidden subgraph, we have $d_G(x)\le 5$, so, $G\cong B_6, B_7$. If $d_G(y)=3$, then $G\cong B^{\theta}_k$, where $k\ge 0$.

Conversely, suppose that $G\cong B_6, B_7$, or $G\cong B_k^{\theta}$ with $k\ge0$.
 By a direct calculation, we have $\lambda_2(B_6)\approx-0.5578<-\frac{1}{2}$ and  $\lambda_2(B_7)\approx-0.5119<-\frac{1}{2}$.  Assume that $G=B_k^{\theta}$ with $k\ge0$. Then $|V(G)|=k+4$. If $k=0$, then $G\cong \theta(2,2,1)$. By a direct calculation, we have $\lambda_2(\theta(2,2,1))\approx-0.5616<-\frac{1}{2}$. Assume that $k\ge 1$. Then $n\ge 5$. Let $V_1$ be the set of two vertices of degree $2$ and $V_2$ be the set of vertices of degree one. So we partition  $V(G)$ as $V(G)=\{w\}\cup\{z\}\cup V_1\cup V_2$, where $w$ is the vertex with maximum degree and $z$ is the vertex with degree $3$. Under this partition, we have
\[D(G)+2I_n=
\left(
\begin{array}{cccc}
 2  & 1 & J_{1\times2} & J_{1\times(n-4)} \\
 1 & 2 & J_{1\times2} & 2J_{1\times(n-4)} \\
 J_{2\times1} & J_{2\times1} & 2J_2& 2J_{2\times(n-4)} \\
 J_{(n-4)\times1} & 2J_{(n-4)\times1} & 2J_{(n-4)\times2} & 2J_{n-4} \\
\end{array}
\right).
\]
It is easily seen that $D(G)+2I_n$ is of rank $4$, which implies that $0$ is an eigenvalue of $D(G)+2I_n$ with multiplicity $n-4$.
Note that the above partition for $D(G)+2I_n$ is equitable, thus the eigenvalues of its quotient matrix $Q$ are also the eigenvalues of $D(G)+2I_n$, see \cite[Lemma 2.3.1]{BH}, where
\[Q=\left(
\begin{array}{cccc}
 2 & 1 & 2 & n-4 \\
 1 & 2 & 2 & 2(n-4) \\
 1 & 1 & 4 & 2(n-4) \\
 1 & 2 & 4 & 2(n-4) \\
\end{array}
\right).\]
Let $f(\lambda)=\det(\lambda I_4-Q)$. By a direct calculation,
\[
f(\lambda)=\lambda^4-2n\lambda^3+3(n+1)\lambda^2+4(n-6)\lambda-6n+24.
 \]
Note that $f(+\infty)>0$, $f(7)=2404-517n<0$, $f(\frac{3}{2})=-\frac{3}{16}<0$, $f(0)=24-6n<0$, $f(-\infty)>0$, $f(\frac{3n-1}{2n})=\frac{1}{16n^4}(n^4-12n^3+70n^2-12n+1)$. Let $g(x)=x^4-12x^3+70x^2-12x+1$. Then $g'(x)=4(x^3-9x^2+35x-3)$, $g''(x)=4(3x^2-18x+35)$. Since $g''(x)>0$ for all $x$, $g'(x)\ge g'(5)=288>0$ for $x\ge 5$, which implies that $g(x)\ge g(5)=816>0$. Thus $f(\frac{3n-1}{2n})>0$. It follows that the second largest  root $\lambda^{(2)}$ of $f(\lambda)=0$ satisfies
$\frac{3n-1}{2n}<\lambda^{(2)}<\frac{3}{2}$. By the above argument, $\lambda^{(2)}$ is the second largest eigenvalues of $D(G)+2I_n$, i.e., $\lambda_2(G)<-\frac{1}{2}$.
\end{proof}

From the proof of Lemma \ref{bi2}, we have $-\frac{n+1}{2n}<\lambda_2( B_k^{\theta})<-\frac{1}{2}$. Thus, the limit of $\lambda_2( B_k^{\theta})$ as $n$ approaches $+\infty$ is $-\frac{1}{2}$.

\section{Split graphs $G$ with $\lambda_2(G)<-\frac{1}{2}$}

In the following,  we view a clique $K$ of cardinality $s$ of a graph $G$ as $K_s$, the  subgraph of $G$ induced by $K$.

For nonnegative integer $t$, let $SP^{t}$ be the split graph consisting of a clique $K_4$ and an independent set $I=\{x_1,\dots,x_t, w\}$ so that $x_1,\dots, x_t$ have a unique neighbor $u\in V(K_4)$ and $w$ has exactly two neighbors $u,v\in V(K_4)$, see Fig. \ref{ff8}. In particular,  $SP^{0}$ is the split graph with a clique $K_4$ and an  independent set $I=\{w\}$ so that $w$ has exactly two neighbors in $V(K_4)$.

\begin{figure}[htb]
  \centering
  \includegraphics[width=4cm]{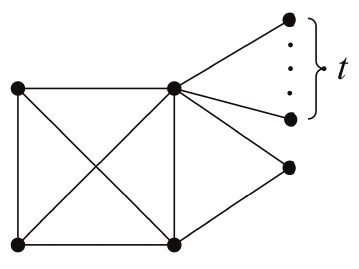}\\
  \caption{Graph $SP^t$.}\label{ff8}
\end{figure}

Let $s$ be an integer with $s\ge 2$.
Let $K_s(x_1,\dots,x_r)$ be the split graph obtained from $K_s$ with vertex set $\{v_1,\dots,v_s\}$ by attaching $x_i$ pendant edges at $v_i$ for $i=1,\dots, r$, where $1\le r\le s$. In particular, $K_2(1,1)\cong P_4$ and $K_2(t)\cong S_{t+2}$.

\begin{Theorem} \label{SED}  Let $G$ be a split graph. Then $\lambda_2(G)<-\frac{1}{2}$ if and only if
\begin{itemize}
   \item $G\cong SP_1, SP_2$ as displayed in Fig. \ref{ff9}, or
   \item $G\cong  B_6, B_7$, or
   \item $G\cong SP^t$, where $t\ge 0$, or
   \item $G\cong B^{\theta}_k$, where $k\ge 0$, or
   \item $G\cong K_s(t)$, where $s\ge 2$ and $t\ge0$, or
   \item $G\cong K_s(a,1)$, where $s\ge 2$ and $a=1,2$, or
   \item $G\cong K_{s}(\underbrace{1,\dots, 1}_t)$, where $s\ge 3$ and $t\ge 3$.
  \end{itemize}
\end{Theorem}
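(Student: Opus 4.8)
The plan is to fix a maximum clique $K$ of $G$, put $s=|K|\ge 2$, and let $I=V(G)\setminus K$ be the complementary independent set; every vertex of $I$ has all its neighbours in $K$ and, since $K$ is a \emph{maximum} clique, is adjacent to at most $s-1$ of them. The argument is driven throughout by Cauchy's interlacing theorem (Lemma~\ref{Interlacing}): no configuration with second largest distance eigenvalue at least $-\tfrac12$ can occur in $G$ as a distance-preserving induced subgraph, and because a split graph has diameter at most $3$ the distance-preserving property is routine to verify in every case below.

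The classification is then forced by a handful of small forbidden subgraphs. First, each vertex of $I$ is adjacent to at most two vertices of $K$: a vertex adjacent to three of them forces $s\ge 4$ and, together with a fourth clique vertex, spans an induced $K_5-e$, whose second largest distance eigenvalue $2-\sqrt6$ exceeds $-\tfrac12$. Second, at most one vertex of $I$ is adjacent to exactly two vertices of $K$: if $w,w'\in I$ were both adjacent to two clique vertices, then at most four vertices of $K$ together with $w$ and $w'$ would span a distance-preserving induced subgraph on at most six vertices whose second largest distance eigenvalue equals $\tfrac{5-\sqrt{33}}2$, $\tfrac{-3+\sqrt5}2$, or $-2+\sqrt3$ according to whether $w$ and $w'$ have two, one, or no common neighbours in $K$, each of these being larger than $-\tfrac12$. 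Third, if some $w\in I$ is adjacent to exactly two vertices of $K$, then $s\le 4$, for otherwise five clique vertices together with $w$ span a distance-preserving induced subgraph with second largest distance eigenvalue above $-\tfrac12$. These facts split the problem into a pendant regime and a non-pendant regime.

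In the pendant regime no vertex of $I$ meets $K$ in more than one vertex, so $G\cong K_s(x_1,\dots,x_r)$, and I would run a short case analysis on the partition $x_1\ge\dots\ge x_r\ge1$. Two clique vertices each carrying at least two pendants give a distance-preserving induced $K_2(2,2)$, whose second largest distance eigenvalue is $\tfrac{-5+\sqrt{17}}2>-\tfrac12$; one clique vertex carrying at least three pendants together with another carrying at least one gives a distance-preserving induced $K_2(3,1)$, again with second largest distance eigenvalue larger than $-\tfrac12$; and three clique vertices carrying pendants with one of them carrying at least two give an analogous small distance-preserving forbidden subgraph. What remains is exactly $K_s(t)$, $K_s(1,1)$, $K_s(2,1)$, and (only for $s\ge3$) $K_s(\underbrace{1,\dots,1}_t)$ with $t\ge3$; when $s=2$ this is the list $S_{t+2}=K_2(t)$, $P_4=K_2(1,1)$, $K_2(2,1)$ of double stars. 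In the non-pendant regime there is a unique $w\in I$, adjacent to exactly two vertices of $K$, every other vertex of $I$ is a pendant, and $s\in\{3,4\}$. If $s=3$ then $G$ is a bicyclic graph having $\theta(2,2,1)$ as an induced subgraph, hence a $\theta$-bicyclic graph, and Lemma~\ref{bi2} applies directly, giving $G\cong B_6,B_7$ or $B^{\theta}_k$. If $s=4$ then $G$ is obtained from $SP^0$ by attaching pendant edges, and a finite analysis of where these pendant edges may sit---they can only accumulate at one of the two neighbours of $w$, apart from the two sporadic graphs $SP_1,SP_2$---yields $G\cong SP^t$, $SP_1$ or $SP_2$.

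For the converse I would verify $\lambda_2(G)<-\tfrac12$ for every graph in the list. The block-graph members follow from Lemma~\ref{block}: $K_s(t)$ is a block star, $K_s(1,1)$ and $K_s(\underbrace{1,\dots,1}_t)$ are loose block graphs, and $K_s(2,1)$ is a distance-preserving induced subgraph of $BG(p,q,3,2,2)$ or $BGA$ (or is checked directly). For $B_6,B_7,B^{\theta}_k$ the estimates are those already obtained in the proof of Lemma~\ref{bi2}. The new graphs $SP^t$, $SP_1$ and $SP_2$ I would treat exactly as $B^{\theta}_k$ was treated there: the equitable partition coming from the obvious symmetries of the graph makes a suitable scalar shift of $D(G)$ of small rank, and the second largest eigenvalue is pinned down by computing the characteristic polynomial $f$ of the quotient matrix and evaluating $f$ at $-\tfrac12$ and at one nearby explicit point. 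I expect the main obstacle to be the bookkeeping: in the pendant and pendant-placement analyses one must exhibit, branch by branch, a concrete small distance-preserving subgraph whose second largest distance eigenvalue is at least $-\tfrac12$, and for the one-parameter families $SP^t$, $K_s(t)$, $K_s(\underbrace{1,\dots,1}_t)$, whose second largest distance eigenvalue tends up to $-\tfrac12$ as the order grows, one must control the sign of $f$ over an entire interval rather than merely at a point.
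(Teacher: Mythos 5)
Your proposal is correct and follows essentially the same route as the paper: fix a maximum clique $K_s$ and independent set $I$, use interlacing with small distance-preserving forbidden subgraphs ($K_5-e$, the graphs $F_{11},F_{12},F_{13}$ in your two/one/no-common-neighbour cases, and the double-star obstructions) to show at most one vertex of $I$ has two neighbours in $K_s$ and that then $s\le 4$, reduce $s=3$ to Lemma \ref{bi2}, classify the pendant regime as $K_s(t)$, $K_s(1,1)$, $K_s(2,1)$, $K_s(1,\dots,1)$, and verify the converse via Lemma \ref{block} together with equitable-partition computations for the infinite families $SP^t$ and $K_s(2,1)$ (the paper's Lemmas \ref{b5} and \ref{b6}). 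Your only deviations are organizational (capping the number of clique-neighbours at two up front, and the optional embedding of $K_s(2,1)$ into $BG(p,q,3,2,2)$/$BGA$, which the paper instead checks directly), and your quoted eigenvalues all agree with the paper's.
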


\begin{figure}[htb]
  \centering
  \includegraphics[width=10cm]{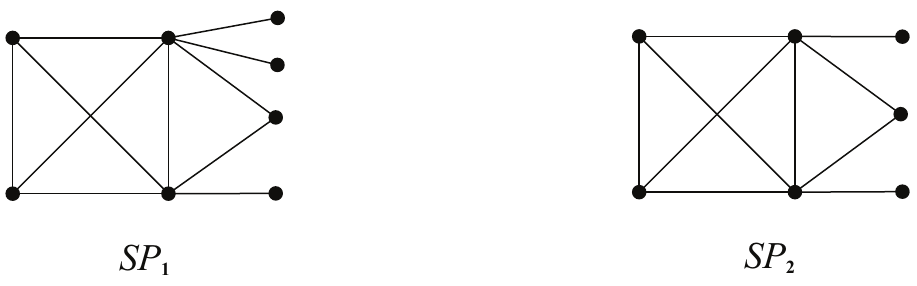}\\
  \caption{Graphs $SP_1, SP_2$.}\label{ff9}
\end{figure}

To prove Theorem \ref{SED}, we need two lemmas.

\begin{Lemma}\label{b5} Let $G=SP^{t}$ with $t\ge 0$. Then $\lambda_2(G)<-\frac{1}{2}$.
\end{Lemma}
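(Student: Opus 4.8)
\textbf{Proof proposal for Lemma~\ref{b5}.}
The plan is to compute $\lambda_2(SP^t)$ directly by exploiting the high symmetry of the graph, exactly in the spirit of the computation for $B_k^\theta$ in the proof of Lemma~\ref{bi2}. Write $n=t+5$ for the number of vertices. Label the clique $V(K_4)=\{u,v,a,b\}$, where $u$ is the vertex carrying the $t$ pendant vertices $x_1,\dots,x_t$ and $v$ is the other neighbour of $w$; the vertices $a,b$ are the two remaining clique vertices, which are interchangeable. The distances are: $d(u,x_i)=1$, $d(v,x_i)=d(a,x_i)=d(b,x_i)=2$, $d(x_i,x_j)=2$, $d(x_i,w)=3$, $d(u,w)=2$, $d(v,w)=1$, $d(a,w)=d(b,w)=2$, and all clique pairs at distance $1$. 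I would partition $V(SP^t)$ into the cells $\{u\}$, $\{v\}$, $\{a,b\}$, $\{w\}$, and $\{x_1,\dots,x_t\}$; this partition is equitable for $D(G)$ (equivalently for $D(G)+2I_n$, which I will use so that the threshold $-\tfrac12$ becomes $\tfrac32$).

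The first step is to observe that $D(G)+2I_n$ has small rank. As in Lemma~\ref{bi2}, all rows indexed by $a$ and $b$ are identical, and all rows indexed by $x_1,\dots,x_t$ are identical, so the rank is at most $5$; hence $0$ is an eigenvalue of $D(G)+2I_n$ of multiplicity at least $n-5$, i.e.\ $-2$ is a distance eigenvalue of multiplicity at least $n-5$. The five remaining eigenvalues are precisely the eigenvalues of the $5\times 5$ quotient matrix $Q$ of $D(G)+2I_n$ with respect to the above equitable partition (using \cite[Lemma 2.3.1]{BH}), whose rows record, for a representative of each cell, the row sums of $D(G)+2I_n$ restricted to each cell. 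The second step is to form the characteristic polynomial $f(\lambda)=\det(\lambda I_5-Q)$ as an explicit polynomial in $\lambda$ with coefficients linear in $n$ (or in $t$), and to locate its second largest root. I would show $\lambda_2(G)<-\tfrac12$ by evaluating $g(\lambda):=f(\lambda)$ at $\lambda=\tfrac32$ (i.e.\ at the shifted threshold) together with sign information at $+\infty$ and at one point to the right of $\tfrac32$ — the goal being to exhibit $g(\tfrac32)$ and $g(+\infty)$ with, say, the same sign while forcing all four of the non-zero-and-non-$(-2)$ roots other than the Perron root to lie strictly left of $\tfrac32$, exactly the kind of sign-chase carried out for $f$ in the proof of Lemma~\ref{bi2}. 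For the very small cases $t=0,1$ one can instead just quote the numerical values (e.g.\ $\lambda_2(SP^0)$ and $\lambda_2(SP^1)$), so the delicate part is only the uniform argument for large $t$.

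The main obstacle I anticipate is purely bookkeeping: getting the entries of $Q$ right (the pendant vertices $x_i$ are at distance $2$ from each other and from $v,a,b$ but distance $3$ from $w$, which is the one place a sign error is easy), and then controlling the location of the second root of a degree-$5$ polynomial whose coefficients grow with $n$. A clean way around the degree-$5$ nuisance is to first peel off the known eigenvalue: since the cells $\{a,b\}$ contribute an obvious eigenvector $e_a-e_b$ with eigenvalue $-1$ of $D(G)$ (the two rows of $D(G)$ for $a$ and $b$ agree off the $\{a,b\}$ block and the $2\times2$ block there is $\bigl(\begin{smallmatrix}0&1\\1&0\end{smallmatrix}\bigr)$), and similarly $e_{x_i}-e_{x_j}$ gives eigenvalue $-2$, one can reduce to a $4\times 4$ quotient-type matrix on the cells $\{u\},\{v\},\{a,b\},\{w\},\{x_1,\dots,x_t\}$ collapsed appropriately, and analyse a quartic instead — this is both less error-prone and parallels the quartic $f(\lambda)$ appearing for $B_k^\theta$. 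Either way the conclusion is that $f$ has exactly one root exceeding $\tfrac32$ (the distance spectral radius) and its second root is strictly below $\tfrac32$, whence $\lambda_2(SP^t)<-\tfrac12$; as a byproduct one gets, as in the remark after Lemma~\ref{bi2}, that $\lambda_2(SP^t)\to-\tfrac12$ as $t\to\infty$.
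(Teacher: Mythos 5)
Your overall strategy coincides with the paper's: it uses exactly the five-cell partition $\{u\},\{v\},\{a,b\},\{w\},\{x_1,\dots,x_t\}$, notes it is equitable, extracts the extra eigenvalues coming from the repeated-row structure, and pins down the second largest root of the degree-five characteristic polynomial of the quotient matrix by a sign chase, with an interlacing observation ($SP^t$ is a distance-preserving induced subgraph of $SP^{t+1}$, so $\lambda_2(SP^t)$ is non-decreasing in $t$) reducing everything to large $t$. So the route is the right one; the gaps are in the details you state but do not check.

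Three concrete problems. First, in $D(G)+2I_n$ the rows indexed by $a$ and $b$ are \emph{not} identical: they agree off the $\{a,b\}$ block, but that block is $\bigl(\begin{smallmatrix}2&1\\1&2\end{smallmatrix}\bigr)$. Hence the rank of $D(G)+2I_n$ is not at most $5$, and $-2$ has multiplicity $t-1=n-6$ (from $e_{x_i}-e_{x_j}$), not $n-5$; the vector $e_a-e_b$ contributes the separate eigenvalue $-1$ (your final paragraph has this right, so your two statements contradict each other). The correct count is: five quotient eigenvalues, plus $-1$ once, plus $-2$ with multiplicity $t-1$, totalling $n$. Second, the sign test you propose is insufficient and has the wrong parity: if $g(\tfrac32)$ and $g(+\infty)$ had the same (positive) sign, the number of roots above $\tfrac32$ would be even, which is impossible because the Perron root of $D+2I$ lies there. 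What is actually needed is $g(\tfrac32)<0$ together with further sign changes placing the other four roots strictly below $\tfrac32$; this is precisely the paper's computation in the unshifted variable ($f(-\tfrac12)<0$, $f(-\tfrac{t+1}{2t})>0$, $f(-1)<0$, $f(-3)>0$, plus the signs at $\pm\infty$), and those inequalities are only valid for $t$ large, which is why the monotonicity step is not optional: since you never compute how large "large $t$" must be, checking only $t=0,1$ numerically does not cover the remaining small cases, whereas $\lambda_2(SP^t)\le\lambda_2(SP^{t+1})$ disposes of them all at once. Third, your "clean" reduction to a quartic does not exist: the equitable partition has five cells, so the quotient is genuinely $5\times5$, and neither $-1$ nor $-2$ is a root of its characteristic polynomial (with the paper's $f$ one finds $f(-1)=-2t$ and $f(-2)=-3t$), so no linear factor can be peeled off; the quartic in the $B^{\theta}_k$ computation arose only because that partition has four cells. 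With these repairs — and with the quotient matrix and its characteristic polynomial actually computed — your outline becomes the paper's proof.
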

\begin{proof} Note that $SP^{t}$ is a distance-preserving induced subgraph of $SP^{t+1}$. From Lemma \ref{Interlacing}, we have $\lambda_2(SP^{t})\le \lambda_2(SP^{t+1})$, which implies that  $\{\lambda_2(SP^{t}): t=0,1,\dots\}$ is a non-decreasing sequence. So it suffices to show that $\lambda_2(G)<-\frac{1}{2}$ for large enough $t$.

Let $G=SP^{t}$.  Then $|V(G)|=t+5$. We partition $V(G)$ as $V(G)=V_1\cup V_2\cup\{u\}\cup\{v\}\cup\{w\}$, where $V_1=K_4\setminus\{u,v\}$, $V_2=\{x_1,\dots,x_t\}$.
Under this partition, we have
\[D(G)=
\left(
\begin{array}{ccccc}
 J_{2}-I_{2}  & 2J_{2\times t} & J_{2\times 1} & J_{2\times 1} & 2J_{2\times 1} \\
 2J_{t\times2} & 2(J_{t}-I_t) & J_{t\times 1} & 2J_{t\times 1} & 2J_{t\times 1}\\
 J_{1\times 2} & J_{1\times t} & 0 & 1 & 1 \\
 J_{1\times 2} & 2J_{1\times t}  & 1 & 0 & 1\\
 2J_{1\times 2} & 2J_{1\times t}  & 1 & 1 & 0
\end{array}
\right).
\]

The first two rows of $-I_{t+5}-D(G)$ are equal, implying that $-1$ is a  distance  eigenvalue of $G$ with multiplicity at least $1$, and in  $-2I_{t+5}-D(G)$, there are $t$ equal rows, implying  that $-2$ is a distance  eigenvalue of $G$ with multiplicity at least $t-1$.

Let $Q$ be the quotient matrix of $D(G)$ with respect to the above partition on $V(G)$. Then
\[Q=\left(
\begin{array}{ccccc}
 1 & 2t & 1 & 1 & 2 \\
 4 & 2(t-1) & 1 & 2 & 2 \\
 2 & t & 0 & 1 & 1 \\
 2 & 2t & 1 & 0 & 1 \\
 4 & 2t & 1 & 1 & 0
\end{array}
\right).\]
Note that the above partition  is equitable. Thus the spectrum of $Q$ is contained in the distance spectrum of $G$, see \cite[Lemma 2.3.1]{BH}.
Let $f(\lambda)=\det(\lambda I_5-Q)$. By a direct calculation,
\[
f(\lambda)=\lambda^5-(2t-1)\lambda^4-(15t+17)\lambda^3-(33t+49)\lambda^2-(23t+44)\lambda-5t-12.
 \]
Note that $f(+\infty)>0$,
$f(-0.5)\approx-0.094<0$, $f(-\frac{t+1}{2t})=\frac{1}{32t^5}(t^5+19t^4-142t^3+62t^2-3t+1)>0$ for large enough $t$, $f(-1)=-2t<0$, $f(-3)=10t-24>0$ for $t\ge 4$, $f(-\infty)<0$.
It follows that the second largest root $\lambda^{(2)}$ of $f(\lambda)=0$ satisfies
$-\frac{t+1}{2t}<\lambda^{(2)}<-\frac{1}{2}$ for large enough $t$.
By the above argument, $\lambda^{(2)}$ is the second largest distance eigenvalue of $G$, i.e., $\lambda_2(G)<-\frac{1}{2}$.
\end{proof}

\begin{Lemma}\label{b6} Let $G=K_{s}(2,1)$, where $s\ge 2$. Then $\lambda_2(G)<-\frac{1}{2}$.
\end{Lemma}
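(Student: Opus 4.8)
The plan is to mimic the structure of Lemma \ref{b5}: describe $K_s(2,1)$ explicitly by an equitable partition of its vertex set, read off the obvious eigenvalues coming from repeated rows, and then analyze the quotient matrix. First I would set $n=|V(K_s(2,1))|=s+3$ and let $v_1$ be the clique vertex carrying the two pendant edges, $v_2$ the clique vertex carrying one pendant edge, and $W=\{v_3,\dots,v_s\}$ the remaining $s-2$ clique vertices. Partition $V(G)$ into five cells: $\{v_1\}$, $\{v_2\}$, $W$, the pair $P=\{x_1,x_2\}$ of pendants at $v_1$, and the singleton $\{y\}$ pendant at $v_2$. Every vertex in $W$ plays the same role (distance $1$ to all clique vertices, distance $2$ to all three pendants), and the two vertices of $P$ play the same role, so $-2I_n - D(G)$ has $s-2$ equal rows from $W$ and two equal rows from $P$; hence $-2$ is a distance eigenvalue of multiplicity at least $s-3+1 = s-2$ when $s\ge3$ (and one should note the degenerate case $s=2$, where $G=P_4$ plus a pendant edge, i.e. $K_2(2,1)$, can be checked directly, $\lambda_2\approx -0.5616<-\tfrac12$ or whatever the exact value is).

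Next I would write down the $5\times5$ quotient matrix $Q$ of $D(G)$ for this equitable partition — the entries are all small integers or linear in $s$ — and set $f(\lambda)=\det(\lambda I_5 - Q)$, a quintic whose coefficients are linear polynomials in $s$. Since the partition is equitable, the five roots of $f$ lie in the distance spectrum of $G$ (cf. \cite[Lemma 2.3.1]{BH}), and together with the eigenvalue $-2$ of multiplicity $s-2$ this accounts for all $n=s+3$ eigenvalues. The goal is to show the second largest root $\lambda^{(2)}$ of $f$ satisfies $\lambda^{(2)}<-\tfrac12$; equivalently, exactly one root of $f$ exceeds $-\tfrac12$ and none of the repeated $-2$'s does (the latter being obvious). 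To pin down the sign pattern I would evaluate $f$ at a handful of probe points — $f(+\infty)>0$, $f(-\tfrac12)$, $f(-1)$, $f(-2)$ (note $f(-2)=0$ is forced when $-2$ is a spectral eigenvalue, so actually one probes slightly away from $-2$, or factors out $(\lambda+2)$ first), $f(-\infty)$ — and exhibit one sign change to the right of $-\tfrac12$ and the rest to the left, exactly as in the proof of Lemma \ref{b5}.

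The main obstacle I anticipate is bookkeeping, not ideas: getting the quotient matrix $Q$ correct, computing $f(\lambda)$ as an explicit polynomial in $\lambda$ and $s$ without sign errors, and then choosing probe points whose sign is unambiguous for \emph{all} $s\ge s_0$. In Lemma \ref{b5} the authors circumvented the ``all $t$'' issue by first proving monotonicity via Cauchy interlacing ($SP^t$ is distance-preserving in $SP^{t+1}$) and only verifying the bound for large $t$; the analogous trick here is that $K_s(2,1)$ is a distance-preserving induced subgraph of $K_{s+1}(2,1)$ (add a new clique vertex at distance $1$ to everything in the old clique and distance $2$ to every pendant — distances among old vertices are unchanged since the clique guarantees diameter $2$ wherever it already was, and indeed all pairwise distances are preserved). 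Hence $\{\lambda_2(K_s(2,1))\}_s$ is non-decreasing by Lemma \ref{Interlacing}, so it suffices to show $\lambda_2(K_s(2,1))<-\tfrac12$ for all large $s$, which makes the probe-point estimates much easier — one only needs, e.g., $f(-\tfrac{s+c}{2s})>0$ for large $s$ by looking at the dominant term in $s$, together with $f(-\tfrac12)<0$ and the behaviors at $\pm\infty$. I would also double-check the small cases $s=2,3$ by direct computation to be safe, and remark (as the authors do after Lemma \ref{bi2}) that the bound $\lambda_2(K_s(2,1))\to -\tfrac12$ as $s\to\infty$, so the inequality is asymptotically sharp.
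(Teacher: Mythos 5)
Your plan is essentially the paper's proof: use Cauchy interlacing to see that $K_s(2,1)$ is a distance-preserving induced subgraph of $K_{s+1}(2,1)$, so $\lambda_2(K_s(2,1))$ is non-decreasing in $s$ and it suffices to treat large $s$; then take the same five-cell equitable partition, account for the eigenvalues coming from twin vertices, and locate the second largest root of the quotient's characteristic polynomial between $-\frac{s+1}{2s}$ and $-\frac{1}{2}$ by sign changes (the paper probes at $0$, $-\frac{1}{2}$, $-\frac{s+1}{2s}$, $-1$, $-4$ and $\pm\infty$).

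One concrete slip to fix: the $s-2$ clique vertices in $W$ are \emph{adjacent} twins, so their rows coincide in $-I_n-D(G)$, not in $-2I_n-D(G)$; they contribute the eigenvalue $-1$ with multiplicity at least $s-3$, and only the pendant pair $P$ (twins at distance $2$) contributes $-2$, with multiplicity at least $1$. This is how the paper states it, and the correction is harmless for your argument, since $-1$ and $-2$ are both below $-\frac{1}{2}$ and the count $5+(s-3)+1=s+3$ still exhausts the spectrum. Relatedly, your parenthetical that ``$f(-2)=0$ is forced'' is not correct: $-2$ arises from a twin eigenvector orthogonal to the cell-constant vectors, so it need not be an eigenvalue of the quotient $Q$ (with the paper's $f$ one gets $f(-2)=56-16s\neq 0$ in general), and no factoring or dodging of $-2$ is needed. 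Finally, the separate verification of $s=2,3$ is unnecessary: the interlacing monotonicity already reduces the whole statement to large $s$, exactly as in Lemma \ref{b5}.
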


\begin{proof} Let $G=K_{s}(2,1)$. Then $|V(G)|=s+3$. Since $K_{s}(2,1)$ is a distance-preserving induced subgraph of $K_{s+1}(2,1)$, it follows from Lemma \ref{Interlacing} that $\lambda_2(K_{s}(2,1))\le \lambda_2(K_{s+1}(2,1))$. Then the sequence $\{\lambda_2(K_{s}(2,1)): s=2,3, \dots\}$ does not decrease with $s$. So it suffices to show that $\lambda_2(G)<-\frac{1}{2}$ for large enough $s$.

Let $I=\{w_1,w_2,w_3\}$ be the independent set of $K_{s}(2,1)$. We use $u$ to denote the only common neighbour of $w_1$ and $w_2$, $v$ denotes the neighbor of $w_3$ in $K_{s}(2,1)$. Then we may partition $V(G)$ as $V(G)=V_1\cup V_2\cup\{u\}\cup\{v\}\cup\{w_3\}$, where $V_1=K_s\setminus\{u,v\}$, $V_2=\{w_1,w_2\}$.
Under this partition, we have
\[D(G)=
\left(
\begin{array}{ccccc}
 J_{s-2}-I_{s-2}  & 2J_{(s-2)\times 2} & J_{(s-2)\times 1} & J_{(s-2)\times 1} & 2J_{(s-2)\times 1}\\
 2J_{2\times(s-2)} & 2(J_{2}-I_2)   & J_{2\times 1} & 2J_{2\times 1} & 3J_{2\times 1}\\
 J_{1\times (s-2)} & J_{1\times 2}  & 0 & 1 & 2 \\
 J_{1\times (s-2)} & 2J_{1\times 2} & 1 & 0 & 1\\
 2J_{1\times (s-2)}& 3J_{1\times 2} & 2 & 1 & 0
\end{array}
\right).
\]
It is easy to see that $-1$ is a distance  eigenvalue of $G$ with multiplicity  at least $s-3$ and $-2$ is a distance eigenvalue of $G$ with multiplicity at least $1$.

Let $Q$ be the quotient matrix of $D(G)$ respect the above partition on $V(G)$. Then
\[Q=\left(
\begin{array}{ccccc}
 s-3 & 4 & 1 & 1 & 2 \\
 2(s-2) & 2 & 1 & 2 & 3 \\
 s-2 & 2 & 0 & 1 & 2 \\
 s-2 & 4 & 1 & 0 & 1 \\
 2(s-2) & 6 & 2 & 1 & 0
\end{array}
\right).\]
As the above partition  is equitable, the eigenvalues of $Q$ are also distance eigenvalues  of $G$.
Let $f(\lambda)=\det(\lambda I_5-Q)$. Then
\[
f(\lambda)=\lambda^5-(s-1)\lambda^4-12(s+1)\lambda^3-(40s+2)\lambda^2-(37s-10)\lambda-10s+4.
\]
Note that $f(+\infty)>0$, $f(0)=-10s+4<0$, $f(-\frac{1}{2})=\frac{1-2s}{32}<0$, $f(-\frac{s+1}{2s})=\frac{1}{32s^5}(2s^6-89s^5+233s^4-170 s^3-44s^2+3s+1)>0$ for large enough $s$, $f(-1)=4-2s<0$ for $s\ge 3$, $f(-4)=2(5s-34)>0$ for $s\ge 7$, $f(-\infty)<0$. It follows that the second largest root $\lambda^{(2)}$ of $f(\lambda)=0$ satisfies $-\frac{s+1}{2s}<\lambda^{(2)}<-\frac{1}{2}$ for large enough $s$. Thus, $\lambda^{(2)}$ is the second largest distance eigenvalue of $G$, i.e., $\lambda_2(G)<-\frac{1}{2}$.
\end{proof}

For a graph $G$ and its subgraph $H$ and a vertex $v$ of $G$ outside $H$, let $N_H(v)=N_G(v)\cap V(H)$ and $d_H(v)=|N_H(v)|$.

We are now ready to prove Theorem \ref{SED}.

\begin{proof}[Proof of Theorem \ref{SED}] Let $G$ be a split graph of order $n=s+t$. Let $K_s$ be the maximum clique and $I$ be the independent set in $G$ of size $t$. Then $s\ge 2$.

If $I=\emptyset$, then $G\cong K_s=K_s(0)$ and $\lambda_2(G)=-1<-\frac{1}{2}$.

Suppose that $I\neq\emptyset$.

Suppose first that $s=2$. Let $V(K_2)=\{u,v\}$. Assume that $d_G(u)\ge d_G(v)$. Since $F_{2}$ is a forbidden subgraph, we have $d_G(v)=1,2$. If $d_G(v)=2$, then $d_G(u)\le 3$ due to $F_1$ being forbidden. Thus $G\cong K_{2}(1,1)$ or $K_{2}(2,1)$. If $d_G(v)=1$, then $G\cong K_2(t)$.

By a direct calculation, we have $\lambda_2(K_{2}(2,1))\approx-0.5120<-\frac{1}{2}$. From Lemma \ref{block}, we have $\lambda_2(K_{2}(1,1))<-\frac{1}{2}$ and $\lambda_2(K_2(t))<-\frac{1}{2}$.

Suppose next that $s\ge 3$. Let $ID_G=\{z\in I: d_{K_s}(z)\ge 2\}$.

\noindent
{\bf Claim.} $|ID_G|=0,1$.

Otherwise, there exist $z_1$ and $z_2$  in $I$ with $d_{K_s}(z_1)\ge 2$ and $d_{K_s}(z_2)\ge 2$.  Let $x_1,x_2\in N_{K_s}(z_1)$ and $y_1,y_2\in N_{K_s}(z_2)$. There are three possibilities.

Suppose that $|\{x_1,x_2\}\cap\{y_1,y_2\}|=2$, i.e., $\{x_1,x_2\}=\{y_1,y_2\}$.
Note that $s\ge 3$. Since $K_s$ is a maximum clique, there exist $x_3,x_4\in V(K_s)$ such that $x_3z_1\notin E(G)$, $x_4z_2\notin E(G)$. Suppose that $x_3\neq x_4$. Then $x_3z_2\notin E(G)$. Otherwise, $G[\{x_1,x_2,x_3,x_4,z_2\}]\cong K_5-e$, and $\lambda_2(K_5-e)\approx-0.4495>-\frac{1}{2}$, a contradiction. So either $x_3\neq x_4$ and $x_3z_2\notin E(G)$ or $x_3=x_4$. In either case, $G[\{x_1,x_2,x_3,z_1,z_2\}]\cong F_{12}$, also a contradiction.

Suppose that $|\{x_1,x_2\}\cap\{y_1,y_2\}|=1$. Without loss of generality, let $y_1=x_1, y_2\neq x_2$. Then $G[\{x_1, x_2, y_2, z_1, z_2\}]\cong F_{11}$, a contradiction.

Suppose that $|\{x_1,x_2\}\cap\{y_1,y_2\}|=0$. Then  $G[\{x_1, x_2, y_1, y_2, z_1, z_2\}]\cong F_{13}$, also a contradiction.

Therefore,  the claim follows.

By the above claim, $|ID_G|=0,1$.

\noindent \textbf{Case 1.} $|ID_G|=1$.

Let $ID_G=\{z\}$.

If $s=3$, then $G$ is a $\theta$-bicyclic graph, so by Lemma \ref{bi2}, we have $\lambda_2(G)\le -\frac{1}{2}$ if and only if $G\cong B_6, B_7, B^{\theta}_k$.

Assume that $s\ge 4$. Then $d_{K_s}(z)=2$. Otherwise, there is a distance-preserving subgraph isomorphic to $K_5-e$ and $\lambda_2(K_5-e)\approx-0.4495>-\frac{1}{2}$, a contradiction.
Suppose that $s\ge 5$. Then there is a distance-preserving subgraph, say $H$, induced by $V(K_5)\cup \{z\}$. By a direct calculation, we have $\lambda_2(H)\approx-0.4913>-\frac{1}{2}$, a contradiction.
It thus follows that $s=4$. Let $N_{K_s}(z)=\{u,v\}$ with $d_G(u)\ge d_G(v)$. Let $w\in V(K_4)\setminus\{u,v\}$. Since $F_9$ is a forbidden graph, $d_G(w)=3$. Since $F_2$ is a forbidden subgraph, we have $d_G(v)=4, 5$. If $d_G(v)=5$, then, since $F_1$ is a forbidden subgraph, we have
 $d_G(u)=5, 6$. So $G\cong SP_1$ if $d_G(u)=6$ and $G\cong SP_2$ if $d_G(u)=5$. If $d_G(v)=4$, then $G\cong SP^t$ with $t\ge 0$.

By a direct calculation, we have $\lambda_2(SP_1)\approx-0.5106<-\frac{1}{2}$. Since $SP_2$ is an induced distance-preserving subgraph of $SP_1$, $\lambda_2(SP_2)\le \lambda_2(SP_1)<-\frac{1}{2}$. By Lemma \ref{b5}, $\lambda_2(SP^t)<-\frac{1}{2}$.

\noindent \textbf{Case 2.} $|ID_G|=0$.

If there is exactly one vertex in $K_s$ with degree not less than $s$, then $G\cong K_s(t)$, which is a block star, and by Lemma \ref{block}, $\lambda_2(K_s(t))<-\frac{1}{2}$.

Suppose that there are exactly two vertices in $K_s$, say $u, v$ with $d_G(u)\ge d_G(v)\ge s$. Since $F_2$ is a forbidden subgraph, we have $d_G(v)\le s$. So $d_G(v)=s$. Furthermore, since $F_1$ is a forbidden subgraph, we have $d_G(u)\le s+1$. Then $G\cong K_{s}(2,1)$ if $d_G(u)=s+1$, and $G\cong K_{s}(1,1)$ if $d_G(u)=s$. From Lemma \ref{b6}, $\lambda_2(K_{s}(2,1))<-\frac{1}{2}$. Since $K_{s}(1,1)$ is an induced distance-preserving subgraph of $K_{s}(2,1)$, $\lambda_2(K_{s}(1,1))\le \lambda_2(K_{s}(2,1))<-\frac{1}{2}$.

If there are $t$ vertices in $K_s$ with degree not less than $s$ with $t\ge 3$, then, since $F_6$ is a forbidden subgraph, we have $G\cong K_{s}(\underbrace{1,\dots, 1}_t)$,  which is a loose block. From Lemma \ref{block}, we have $\lambda_2(K_{s}(\underbrace{1,\dots, 1}_t))<-\frac{1}{2}$.
\end{proof}

\section{Conclusions}

In this article, we consider graphs for which the second largest distance eigenvalue is less than
$-\frac{1}{2}$. We show that  any such  graph must be chordal and characterize all bicyclic graphs and split graphs with the desired property. Characterizing  graphs with the desired property among graphs with forbidden subgraphs or minors would be a way to have a fuller understanding the behavior of the second largest distance eigenvalue.

\vspace{4mm}
\noindent
{\bf Declaration of competing interest}

The authors declare that they have no conflict of interest.

\vspace{5mm}

\noindent
{\bf Data availability}

No data was used for the research described in the article.

\vspace{5mm}

\noindent {\bf Acknowledgements}

We thank the two referees for constructive comments and suggestions.
This work was supported by the National Natural Science Foundation of China (Nos. 12101249 and  12071158).

\end{document}